\documentclass{amsart}

\newtheorem{thm}{Theorem}[section]
\newtheorem{theorem}[thm]{Theorem}

\newtheorem{corollary}[thm]{Corollary}

\newtheorem{lemma}[thm]{Lemma}

\newtheorem{proposition}[thm]{Proposition}

\theoremstyle{definition}

\newtheorem{definition}[thm]{Definition}

\theoremstyle{remark}
\newtheorem{remark}{Remark}[section]

\newtheorem{question}{Question}


\newtheorem{Fact}{Fact}

\def\val{\operatorname{val}}

\usepackage{amssymb}
\usepackage{amsrefs}

\DefineSimpleKey{bib}{primaryclass}{}
\DefineSimpleKey{bib}{archiveprefix}{}

\BibSpec{arXiv}{%
  +{}{\PrintAuthors}{author}
  +{,}{ \textit}{title}
  +{}{ \parenthesize}{year}
  +{,}{ arXiv }{eprint}
  +{,}{ primary class }{primaryclass}
}

\def\dom{\operatorname{dom}}

\begin{document}
\title{Not OCA and products of Fr\'echet spaces}
\author{Alan Dow}
\address{Department of Mathematics and Statistics, University
of North Carolina at Charlotte, Charlotte, NC, USA}

\begin{abstract}
We continue the investigation of the question of whether the product of two
countable Fr\'echet spaces must be M-separable. 
We are especially interested in this question in the presence
of Martin's Axiom.  The question has been shown to be
independent of Martin's Axiom but only in models in
which $\mathfrak c\leq\omega_2$.  In fact, OCA
implies an affirmative answer.
\end{abstract}

\keywords{gaps, Martin's Axiom, Fr\'echet-Urysohn}
\subjclass{
03E50,
03E35,
54A35,
54D55}

\maketitle

\section{Introduction}
A space is Fr\'echet (or Fr\'echet-Urysohn) if 
it satisfies that if a point $x$ is in the closure
of a set $A$, there is a standard $\omega$-sequence
of points from $A$ converging to $x$. 
A space $X$ is M-separable, also known as
selectively separable, if for every countable family
 $\{ D_n : n\in \omega \}$ of dense subsets, there is 
 selection $\{ H_n\in [D_n]^{<\aleph_0} : n\in \omega\}$ 
 satisfying that $H = \bigcup_n H_n$ is dense.
 Every
separable Fr\'echet space is M-separable \cite{BaDo1}.
A product of two separable Fr\'echet spaces need
not be Fr\'echet, but might the product still be
M-separable?  

There are two main results of this paper.
The first is   that in models
of Martin's Axiom in which there are special  
$(\mathfrak c, \mathfrak c)$-gaps, there will
be pairs of countable Fr\'echet spaces with a product
that is not M-separable. The second is that 
in standard models of $\operatorname{MA}(\sigma\mbox{-linked})$,
it will hold that the product of any two countable
Fr\'echet spaces will be M-separable. 
In both results there is no (new) restriction on the size
of the continuum.  
 $\operatorname{MA}(\sigma\mbox{-linked})$ is 
 the statement (see \cite{AvTo11}) that Martin's Axiom holds
 for ccc posets that can be expressed as a
 countable union of linked subsets.

Any countable space with $\pi$-weight less than $\mathfrak d$
is M-separable and, in the Cohen model every countable
Fr\'echet space has $\pi$-weight at most $\aleph_1$
 \cite{BaDo2}.  Therefore we are more interested in
 the question in models in which there are countable
 Fr\'echet spaces with $\pi$-weight at least  $\mathfrak d$. 
 In fact in this paper we focus on models in which
  $\mathfrak b = \mathfrak c$. It is interesting
  that it was shown in \cite{RZbd} that $\mathfrak b = \mathfrak d$
  implies there are countable M-separable spaces whose product
  is not M-separable, but the status of this statement  in ZFC
  is very much open. The cardinals $\mathfrak p, \mathfrak b$,
  and $\mathfrak d$ are the usual cardinal invariants corresponding
  to mod finite orderings on subsets of $\omega$
  known as the pseudointersection number,
   the bounding number, and the dominating number.
 
 Back to the product of countable Fr\'echet spaces in models
 of $\mathfrak b = \mathfrak c$. The known results seem 
 to point to a close connection to the open coloring axiom
 and gaps. It was shown in \cite{BFZ} that in a model 
 of Martin's Axiom plus $\mathfrak c=\omega_2$ in which
 there was a strong failure of OCA,
  this strong failure of OCA was crucial to the
  construction  of two
 countable Fr\'echet spaces whose product was
 not M-separable. Improving on the PFA result
 in \cite{BaDo2}, it was shown in \cite{MA3} that the
 version of 
 OCA from \cite{StevoPPT}, which was shown  to imply $\mathfrak b=\omega_2$,
 implies that the product of two countable Fr\'echet spaces
 is necessarily M-separable.  The key step in this 
 proof was that this version of OCA  implies that most unseparated pairs
 of orthogonal
 ideals on $\omega$ will contain a Luzin gap (see Definition \ref{luzin}).
 Two families of subsets of a countable set are said to be orthogonal
 if each member of the first family has finite intersection with
 each member of the second.
 The other well-known version of OCA, denoted OCA$_{[\operatorname{ARS}]}$ in 
 \cite{Justin}, is from \cite{ARS}.
 On the other hand, 
 adding to the seeming OCA connection, it was 
 also shown in \cite{MA3} that 
 Martin's Axiom plus not CH implies there are three
 countable Fr\'echet spaces whose product is not M-separable
 \textbf{because}, by Avil\'es-Todor\v{c}evi\'{c} \cite{AvTo11}, in
 such models there are no three dimensional analogues
 of the above mentioned   Luzin gaps.

 We note that  a space $X$ is M-separable 
if every countable filter base
$\mathcal D$ consisting of dense subsets of a space $X$
has a pseudointersection that is dense.  
 Since M stands for Menger, one could define the Menger
 degree of a space $X$, $\mathfrak p$-M$(X)$, to 
 be the minimum cardinality of a filter base of
 dense subsets of $X$ that has no dense pseudointersection.
  A natural family to consider is those countable
  Fr\'echet spaces
 with Menger degree equal to the pseudointersection number
  $\mathfrak p$. We did begin work on this paper 
  by considering whether this smaller family
  of spaces may have better behavior in products
  but could find no results. We leave this remark here
as a simple suggestion for further research.

Motivated by the  paper \cite{Dow2025},
we had hoped to completely solve this  question in this paper,
but it remains open.

\begin{question}  Does $\mbox{MA} + \mathfrak c>\omega_2$ imply
there are two countable Fr\'echet spaces whose product
is not M-separable?
\end{question}

\section{A few combinatorial tools}

 The following observation, a strengthening of
  Arhangel'skii's $\alpha_1$-property for first countable
  spaces, has proven useful in a number of papers. This statement and proof
  is taken right from \cite{MA3} and is included
  for completeness.

  \begin{proposition} Let $\mathcal I$ be a family
  of sequences\label{alpha1} in a countable space $X$ all converging
  to a single point $x$ that has countable character.
  If $\mathcal I$ has cardinality less than $\mathfrak b$,
   then there is a single sequence $S$ converging to
    $x$ that mod finite contains every member of $\mathcal I$.
  \end{proposition}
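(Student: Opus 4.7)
The plan is to reduce the statement to a domination problem in $\omega^\omega$ and then build $S$ explicitly from a dominating function. First I would fix a decreasing neighborhood base $\{U_n : n \in \omega\}$ at $x$, refining it using countability and $T_1$-ness so that $\bigcap_n U_n = \{x\}$, and fix an enumeration $X = \{x_k : k \in \omega\}$.

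Next, for each $s \in \mathcal I$, since $s$ converges to $x$ the set $s \setminus U_n$ is finite for every $n$, so the function
\[
f_s(n) \;=\; \max\{\, k : x_k \in s \setminus U_{n+1}\,\}
\]
(with value $0$ on the empty set) is a well-defined element of $\omega^\omega$. Because $|\mathcal I| < \mathfrak b$, I would pick a single $g \in \omega^\omega$ dominating every $f_s$ mod finite, and for each $s$ fix a threshold $N_s$ with $f_s(n) \le g(n)$ whenever $n \ge N_s$.

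Then I would propose the candidate
\[
S \;=\; \bigcup_{m \in \omega} \bigl\{\, x_k : k \le g(m) \text{ and } x_k \in U_m \setminus U_{m+1}\,\bigr\}.
\]
Each piece is finite, and only those with $m < m_0$ can contribute a point outside $U_{m_0}$, so $S \cap (X \setminus U_{m_0})$ is finite for every $m_0$; hence any enumeration of $S$ converges to $x$. For the mod finite containment, given $x_k \in s$ with $x_k \ne x$, let $m$ be the unique index with $x_k \in U_m \setminus U_{m+1}$ (available thanks to $\bigcap U_n = \{x\}$). If $x_k \notin S$ then $k > g(m)$, while $x_k \in s \setminus U_{m+1}$ forces $k \le f_s(m)$; together these give $g(m) < f_s(m)$, hence $m < N_s$, so all exceptional points lie in the finite set $s \setminus U_{N_s}$.

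The hard part beyond bookkeeping will be calibrating the off-by-one between the index $n+1$ in the definition of $f_s$ and the layer $U_m \setminus U_{m+1}$ used to define $S$, so that a single dominating $g$ simultaneously controls the convergence of $S$ and the mod finite containment of each $s \in \mathcal I$. A subsidiary point is justifying the reduction to $\bigcap_n U_n = \{x\}$, needed to ensure the layers partition $X \setminus \{x\}$ so no point of $s$ can evade every stratum.
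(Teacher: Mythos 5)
Your proof is correct and takes essentially the same route as the paper's: stratify $X$ into the layers $U_n\setminus U_{n+1}$, code each sequence by a function in $\omega^\omega$ recording how far it reaches into each layer, and let a function dominating all of these (available since $|\mathcal I|<\mathfrak b$) carve out $S$ as a union of finite pieces, one per layer. The only difference is bookkeeping --- you use one global enumeration of $X$ where the paper enumerates each layer separately --- which changes nothing essential.
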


\begin{proof} Fix a  descending neighborhood
basis, $\{U_n : n \in\omega\}$, for $x$
with $U_0=X$. For each
 $n\in\omega$, let $X_n = U_n\setminus U_{n+1}$.
 There is nothing to prove if $x$ has a neighborhood
 that is simply a converging sequence, so
  we may assume that each $X_n$ is infinite.
For each $n\in\omega$,
 choose an  enumeration, $\{ x(n,m) : m\in \omega\}$
 of $X_n$. For each $I\in\mathcal I$, there is a
 function $f_I\in \omega^\omega$ satisfying
 that $I \subset \bigcup_n   \{ x(n,m) : m<f_I(n)\}$. 
 Therefore, if $|\mathcal I|<\mathfrak b$, we may
 choose a function $f\in\omega^\omega$ so that
  $f$ is eventually larger than each $f_I$. 
  It is easy to check that $S =\bigcup_n
   \{ x(n,m) : m< f(n)\}$ is a sequence
   that converges to $x$ 
   and which satisfies that $I\setminus S$
   is finite for all $I\in \mathcal I$.
\end{proof}

For a set $A$ in a space $X$, we let
 $A^{(1)} $ be the set of points $x$ of $X$ for which
 there is a countable, possibly constant,
 sequence from $A$ converging to $x$.

\begin{proposition}[\cite{Dow2014}] If a space $X$ has\label{frechet} character less
than $\mathfrak b$, then for every set $A\subset X$,
  the set $(A^{(1)})^{(1)} = A^{(1)}$.
\end{proposition}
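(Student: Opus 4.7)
The inclusion $A^{(1)}\subseteq (A^{(1)})^{(1)}$ is immediate from constant sequences, so the work lies in $(A^{(1)})^{(1)}\subseteq A^{(1)}$. Fix $x\in (A^{(1)})^{(1)}$; if $x\in A^{(1)}$ there is nothing to do, so I may assume $x\notin A^{(1)}$ and pick a sequence $(y_n)_{n\in\omega}$ in $A^{(1)}\setminus\{x\}$ converging to $x$ (any repetition of $x$ among the $y_n$ would immediately place $x$ into $A^{(1)}$). For each $n$ choose a sequence $I_n = (z_{n,m})_{m\in\omega}\subset A$ converging to $y_n$. The goal is to extract a single sequence from $A$ converging to $x$ by a diagonal argument across the family $\{I_n : n\in\omega\}$.

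For the diagonalization, fix a neighborhood base $\{U_\alpha : \alpha<\kappa\}$ of $x$ with $\kappa<\mathfrak b$. Since $y_n\to x$, for each $\alpha$ there is $N_\alpha\in\omega$ with $y_n\in U_\alpha$ for all $n\geq N_\alpha$; as $U_\alpha$ is open it is then also a neighborhood of $y_n$, so $z_{n,m}\to y_n$ gives that all but finitely many terms of $I_n$ lie in $U_\alpha$. I would define $f_\alpha\in\omega^\omega$ by setting $f_\alpha(n)$ equal to the least $m$ such that $z_{n,k}\in U_\alpha$ for every $k\geq m$ when $n\geq N_\alpha$, and $f_\alpha(n)=0$ otherwise. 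Because $\kappa<\mathfrak b$, choose $g\in\omega^\omega$ eventually dominating each $f_\alpha$. Then the diagonal selection $S = (z_{n,g(n)})_{n\in\omega}\subset A$ converges to $x$: for any fixed $U_\alpha$, as soon as $n\geq N_\alpha$ and $g(n)\geq f_\alpha(n)$ both hold, which happens for all but finitely many $n$, we have $z_{n,g(n)}\in U_\alpha$. This exhibits $x\in A^{(1)}$, completing the argument.

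The method is structurally the same bounding-number diagonalization employed in Proposition \ref{alpha1}, and is in fact a shade easier here: I need only produce a single sequence from $A$ converging to $x$, not one that mod-finite contains each $I_n$, and I do not need the ambient space to be countable, only the character of $x$ to be below $\mathfrak b$. The main care point, such as it is, is the bookkeeping around base elements $U_\alpha$ that do not yet contain the early $y_n$; this is dispatched by the threshold $N_\alpha$ coming from $y_n\to x$. No stronger axiom or gap analysis is needed, which is why this proposition can be quoted freely in the model-theoretic arguments to follow.
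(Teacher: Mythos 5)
Your argument is correct. The paper states this proposition without proof, citing \cite{Dow2014}, so there is no in-text argument to compare against; your diagonalization over a neighborhood base of $x$ of size less than $\mathfrak b$ (passing to open basic neighborhoods, taking thresholds $N_\alpha$ from $y_n\to x$, and dominating the functions $f_\alpha$) is the standard proof and is exactly the same $\mathfrak b$-bounding technique the paper uses to prove Proposition \ref{alpha1}.
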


The next two items are taken from \cite{StevoPPT}*{\S 8}
and \cite{Farah2000}*{Theorem 2.2.1}

\begin{definition} A family 
 $\{ (I_\alpha,J_\alpha) : \alpha<\omega_1\}$  
 is a Luzin gap\label{luzin} 
 if  $\bigcup\{ I_\alpha\cup J_\alpha : \alpha\in\omega_1\}$ is countable,
 , for each $\alpha\neq \beta$, $I_\alpha\cap J_\alpha$ is empty,
 $I_\alpha\cap J_\beta$ is finite, and
  $(I_\alpha\cap J_\beta)\cup (I_\beta\cap J_\alpha)$ is not empty.
\end{definition}

\begin{proposition}  If $\{ (I_\alpha,J_\alpha) : \alpha<\omega_1\}$ is a Luzin gap,
then\label{nosplit} the family $\{ I_\alpha : \alpha\in\omega_1\}$ can not be mod
finite separated, or \textit{split}, from the family 
 $\{ J_\alpha : \alpha\in\omega_1\}$. 
\end{proposition}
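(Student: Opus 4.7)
The plan is to argue by contradiction: suppose there is a single set $S\subseteq\omega$ that splits $\{I_\alpha\}$ from $\{J_\alpha\}$, meaning $I_\alpha\subseteq^* S$ and $J_\alpha\cap S=^* \emptyset$ for every $\alpha<\omega_1$. For each $\alpha$ choose witnesses $n_\alpha,m_\alpha\in\omega$ so that $I_\alpha\setminus n_\alpha\subseteq S$ and $J_\alpha\cap S\subseteq m_\alpha$.

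Next I would carry out a standard pigeonhole/$\Delta$-system style stabilization. Let $N=\omega$ be the countable set that supports the Luzin gap. Since each $\alpha$ is assigned the quadruple $(n_\alpha,m_\alpha,I_\alpha\cap k_\alpha,J_\alpha\cap k_\alpha)$ with $k_\alpha=\max(n_\alpha,m_\alpha)$, and these quadruples live in a countable set (pairs of naturals together with finite subsets of a countable set), there is an uncountable $A\subseteq\omega_1$ on which all four coordinates are constant: $n_\alpha=n$, $m_\alpha=m$, $I_\alpha\cap k= F_I$, $J_\alpha\cap k = F_J$, where $k=\max(n,m)$.

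Now pick any two distinct $\alpha,\beta\in A$. By the Luzin property, $(I_\alpha\cap J_\beta)\cup(I_\beta\cap J_\alpha)$ is nonempty, so without loss of generality fix some $\ell\in I_\alpha\cap J_\beta$. If $\ell\geq k$ then $\ell\in I_\alpha\setminus n\subseteq S$ while $\ell\in J_\beta\cap S\subseteq m\leq k$, giving a direct contradiction. If $\ell<k$, then on the one hand $\ell\in I_\alpha\cap k=F_I$, and on the other $\ell\in J_\beta\cap k=F_J$; but stabilization also forces $F_I=I_\beta\cap k\subseteq I_\beta$ and $F_J= J_\beta\cap k\subseteq J_\beta$, hence $\ell\in I_\beta\cap J_\beta$, contradicting the disjointness clause $I_\beta\cap J_\beta=\emptyset$ in Definition \ref{luzin}.

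The only nontrivial point is the stabilization step, which is routine once one notices that the relevant data for each $\alpha$ lies in a countable set; everything else is a direct unpacking of the Luzin-gap axioms. I do not expect any genuine obstacle here — the argument is essentially Todor\v{c}evi\'{c}'s classical observation that a Luzin gap is unsplittable.
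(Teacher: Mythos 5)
Your proof is correct and follows essentially the same route as the paper's: both assume a splitter, stabilize the finite exceptional parts ($I_\alpha\setminus S$, $J_\alpha\cap S$, and the traces of $I_\alpha,J_\alpha$ on the resulting finite set) over an uncountable set, and then derive a contradiction from the Luzin condition $(I_\alpha\cap J_\beta)\cup(I_\beta\cap J_\alpha)\neq\emptyset$ by splitting into the cases of the witness lying inside or outside the stabilized finite set. The only cosmetic difference is that the paper works with an enumeration $e$ of the countable support and a finite set $F$ of indices, whereas you identify the support with $\omega$ and use initial segments; this is an inessential reindexing.
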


For completeness, we include   the easy proof.

\begin{proof}  Fix an enumeration $e:\omega\rightarrow 
\bigcup\{ I_\alpha\cup J_\alpha : \alpha\in\omega_1\}$.
Suppose that  $A$ is   a set satisfying
that each of  $I_\alpha\setminus A$ and
 $J_\alpha\cap A$ are  finite  
for all $\alpha\in\omega_1$.
Choose a finite subset $F$ of $\omega$ so that there is an 
uncountable subset $\Gamma$ of $\omega_1$ satisfying that,
for all $\alpha\in \Gamma$, 
 $I_\alpha\setminus A\subset e(F)$ 
 and $J_\alpha\cap A\subset e(F)$.
If necessary, shrink $\Gamma$ further so that, 
 for all $\alpha,\beta\in \Gamma$,
 $I_\alpha\cap e(F)=I_\beta\cap e(F)$ and
  $J_\alpha\cap e(F) = J_\beta\cap e(F)$. 
  Note that for $\alpha\neq\beta\in \Gamma$, 
   $I_\alpha \setminus e(F)\subset A$
   and $J_\beta \setminus e(F)$ is disjoint from $A$.
   Since, in addition, $I_\alpha\cap e(F)$ is disjoint
   from $J_\alpha \cap e(F) = J_\beta\cap e(F)$, 
   this contradicts that the family was Luzin.
\end{proof}

\section{MA, a failure of OCA and two Fr\'echet spaces}

\begin{definition} Say that two ideals $\mathcal I_1, \mathcal I_2$
form a tight  $\omega^\omega$-gap if 
\begin{enumerate}
\item every member of $\mathcal I_1\cup \mathcal I_2$ is a subset
of $\omega\times\omega$,
\item for each $I\in \mathcal I_1\cup \mathcal I_2$, $I$ is a subset
of $f^{\downarrow}  = \{ (n,m) : m < f(n)\}$    for some
 $f\in \omega^\omega$, 
\item $\mathcal I_1$ and $\mathcal I_2$ are orthogonal,
\item  for each $f\in \omega^\omega$, there are $a_f\in\mathcal I_1$
and $b_f\in \mathcal I_2$ such that $f^{\downarrow}= a_f\cup b_f$,
\item for any $X\subset \omega\times\omega$ such that
 $\{ n\in\omega : X\cap (\{n\}\times\omega)\ \mbox{is infinite}\}$
 is infinite, 
there is an $f\in \omega^\omega$ such that each of
 $X\cap a_f$ and $X\cap b_f$ are infinite.
\end{enumerate}
\end{definition}

\begin{remark}  CH implies there are tight $\omega^\omega$-gaps.
Todor\v{c}evi\'{c} \cite{StevoPPT} showed that OCA implies
there is no tight $\omega^\omega$-gap.
It is proven in \cite{Dow2025} that it is consistent with
 Martin's Axiom and $\mathfrak c$ arbitrarily large that there is
 no tight $\omega^\omega$-gap.  
\end{remark}

 The proof of the following Lemma 
 is technical and is simply following the methods of Laver \cite{Laver},
  see also Rabus \cite{Rabus}*{Theorem 1}
  and Scheepers \cite{Scheepers}, showing that $(\omega_1,\omega_1)$-gaps
  that are added generically, can be split by a ccc poset.
  We postpone the proof to the last section.

\begin{lemma} For any\label{tight}
cardinal $\kappa >\omega_1$ such that $\kappa^{<\kappa}=\kappa$,
there is a ccc poset $P$ such that in the forcing extension by $P$,
Martin's Axiom holds, $\mathfrak c=\kappa$, and 
there is a tight $\omega^\omega$-gap. 
\end{lemma}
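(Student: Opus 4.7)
The plan is to perform a finite-support ccc iteration $P=\langle P_\alpha,\dot Q_\alpha:\alpha<\kappa\rangle$ over a ground model of CH, with the hypothesis $\kappa^{<\kappa}=\kappa$ providing the arithmetic needed to enumerate small ccc-poset names. The iteration will simultaneously force Martin's Axiom at $\mathfrak c=\kappa$, via the standard $\kappa$-book-keeping that handles every $P$-name for a ccc poset of size less than $\kappa$, and build a generic tight $\omega^\omega$-gap $(\mathcal I_1,\mathcal I_2)$ whose generators and decomposition witnesses $(a_f,b_f)$ are added at cofinally many stages as new functions $f\in\omega^\omega$ appear in intermediate extensions.

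Concretely, at each designated \emph{gap stage} $\alpha$, I would take $\dot Q_\alpha$ to be a countable (hence $\sigma$-linked and ccc) poset whose conditions are finite approximations to new members of $\mathcal I_1$ and $\mathcal I_2$, carrying finite side commitments that enforce pairwise orthogonality and the decomposition $a_{\dot f}\cup b_{\dot f}=\dot f^{\downarrow}$ for a prescribed $P_\alpha$-name $\dot f$. Summed over the iteration, these stages yield clauses (1)--(4) of the tight-gap definition: the final ideals are $\mathcal I_j=\bigcup_{\alpha<\kappa}\mathcal I_j^{\alpha}$, and the book-keeping guarantees that every $f\in\omega^\omega\cap V^P$ eventually receives a decomposition. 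All the usual MA preservation facts (ccc-ness, finite-support continuity, reflection of names) apply unchanged.

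The main obstacle, and the reason the excerpt invokes the methods of Laver, Rabus, and Scheepers, is preserving clause (5) through the whole iteration: in $V^P$, every $X\subseteq\omega\times\omega$ with infinitely many infinite columns must admit some $f$ with both $X\cap a_f$ and $X\cap b_f$ infinite. I would argue by contradiction. Suppose $\dot X$ is a $P$-name witnessing failure of (5); using $\kappa^{<\kappa}=\kappa$ together with ccc-reflection, I may take $\dot X$ to be a $P_\beta$-name for some $\beta<\kappa$, and then arrange the book-keeping so that a later gap stage $\alpha>\beta$ commits to a decomposition $a_f\cup b_f=f^{\downarrow}$ specifically tailored to meet pre-chosen rows $n$ with $\dot X\cap(\{n\}\times\omega)$ infinite. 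A $\Delta$-system argument applied to the finite supports of conditions deciding long enough initial segments of $\dot X$, combined with the countability of $\dot Q_\alpha$, then produces an uncountable antichain in $P_{\alpha+1}$, contradicting ccc.

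The technical heart of the proof is designing $\dot Q_\alpha$ and the book-keeping so that this incompatibility argument runs uniformly across stages---in effect, the generically added decompositions $(a_f,b_f)$ must be made indestructible enough that no later ccc extension can simultaneously evade splitting every potential counterexample $\dot X$. This is the analogue, for tight $\omega^\omega$-gaps, of the Laver--Rabus--Scheepers splitting-pair construction for generic $(\omega_1,\omega_1)$-gaps, and is exactly where I expect the argument to become most delicate; the proof is therefore deferred to the final section, as the excerpt indicates.
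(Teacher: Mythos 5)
Your high-level frame (finite-support ccc iteration over a model with $\kappa^{<\kappa}=\kappa$, bookkeeping for MA, gap stages interleaved with arbitrary ccc posets, reflection of names to initial segments) matches the paper, but the proposal misplaces the actual difficulty and the mechanism at the gap stages is not viable as described. First, the gap-stage posets cannot be countable. Clauses (2) and (4) of the definition force the functions $f$ indexing the decompositions to form a dominating family (a single $f$ not dominated by any $f_\alpha$ has $f^{\downarrow}$ not coverable by any $a_{f_\alpha}\cup b_{f_\alpha}$), and orthogonality must be maintained against \emph{all} $\kappa$-many previously added pairs. The paper's poset $Q(\mathcal H_\lambda)$ therefore has conditions carrying a finite subset $F_q$ of the ordinal $\lambda$ and a real $f_q$; it adds a dominating real $f_\lambda$ together with a $2$-coloring $h_\lambda$ of $f_\lambda^{\downarrow}$ that coheres mod finite with every earlier $h_\alpha$, and the gap is read off as $a_{f_\alpha}=h_\alpha^{-1}(0)$, $b_{f_\alpha}=h_\alpha^{-1}(1)$, so that coherence \emph{is} orthogonality. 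A countable poset with finite side commitments cannot do this.

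Second, and more importantly, clause (5) is not where the Laver--Rabus--Scheepers machinery is needed; in the paper it is a one-paragraph density argument (reflect $\dot X$ to a $P_\lambda$-name; any condition in $Q(\mathcal H_\lambda)$ extends to one putting both colors on $X$ in some new column, so $h_\lambda$ generically splits $X$). Your proposed contradiction argument, extracting an uncountable antichain from a counterexample name $\dot X$, has no evident route to an antichain and is solving the wrong problem. The genuine technical heart is the opposite danger: at a stage $\lambda$ of cofinality $\omega_1$, the poset extending the coherent family might fail to be ccc because the arbitrary ccc posets interleaved for MA could have turned the partial gap into a Luzin-type (unextendable) one. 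The paper's Lemma \ref{cccequivalent} characterizes ccc-ness of $Q(\mathcal H_\lambda)$ by the existence, in every uncountable $X\subset\lambda$, of $\alpha<\beta$ with $h_\alpha\cup h_\beta$ a function, and Lemma \ref{cohere} is an elaborate amalgamation/$\Delta$-system argument showing this property is preserved at every stage of the iteration. Nothing in your proposal addresses this preservation problem, which is the reason the proof occupies the final section.
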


\begin{theorem} If there is a tight $\omega^\omega$-gap and $\mathfrak b = \mathfrak c$,
 then there are two countable Fr\'echet spaces whose product is not M-separable.
\end{theorem}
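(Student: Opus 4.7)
The plan is to manufacture two countable Fréchet spaces $X_1,X_2$ encoding the ideals $\mathcal{I}_1,\mathcal{I}_2$ of the tight gap, and then to use the tightness condition (5) as the combinatorial obstruction preventing an M-selection in $X_1\times X_2$.

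I would first build each $X_i$ on a countable underlying set containing $\omega\times\omega$ together with a distinguished ``corner'' limit point $\infty_i$ (and probably auxiliary limit points such as column tops, since by condition (2) no full column of $\omega\times\omega$ lies in $\mathcal{I}_1\cup\mathcal{I}_2$). The topology is arranged so that the sequences in $\omega\times\omega$ that mod-finitely converge to $\infty_i$ are precisely the members of $\mathcal{I}_i$; the space should have few (ideally no) isolated points, since otherwise a countable dense set of isolated points in the product would trivially yield an M-selection. To verify Fréchet at $\infty_i$, for $A\subset\omega\times\omega$ with $\infty_i\in\overline A$ I would split each bounded piece $A\cap f^\downarrow$, using condition (4), as $(A\cap a_f)\cup(A\cap b_f)$ with one piece in $\mathcal{I}_i$; orthogonality (condition (3)) forces this piece to be infinite, so it is a convergent subsequence. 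Proposition \ref{alpha1}, together with $\mathfrak b=\mathfrak c$, amalgamates a small family of such sequences into a single convergent one, and Proposition \ref{frechet} upgrades sequential closure to the full Fréchet property.

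The heart of the proof is to construct a countable family $\{D_n:n\in\omega\}$ of dense subsets of $X_1\times X_2$ with no M-selection. Each $D_n$ should be tailored from the gap data (for example, involving the decompositions $a_f,b_f$ for a well-chosen sequence of functions $f_n\in\omega^\omega$). Suppose for contradiction that there is a selection $H_n\in[D_n]^{<\omega}$ with $\bigcup H_n$ dense, and project $\bigcup H_n$ onto $\omega\times\omega$ in each coordinate. Applying condition (5) — tightness — one extracts an $f\in\omega^\omega$ with $a_f\in\mathcal{I}_1$ and $b_f\in\mathcal{I}_2$ so that the set $((\omega\times\omega)\setminus a_f)\times((\omega\times\omega)\setminus b_f)$, a neighborhood of $(\infty_1,\infty_2)$, is missed by $\bigcup H_n$ — the desired contradiction to density.

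The main obstacle is the design of the dense sets $D_n$: they must be genuinely dense in $X_1\times X_2$, yet rigid enough that any countable distribution of finite selections yields a coordinate-projection satisfying the ``meets infinitely many columns infinitely'' hypothesis of (5), and so that the corresponding $a_f,b_f$ actually produce a corner neighborhood disjoint from $\bigcup H_n$. This is the essential combinatorial step where \emph{tight} (as opposed to \emph{Luzin}) gaps play their role, mirroring in the opposite direction the OCA-based result of \cite{MA3}; everything else is, by comparison, bookkeeping driven by $\mathfrak b=\mathfrak c$ and the structure imposed by conditions (2)--(4).
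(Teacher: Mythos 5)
There is a genuine gap, and it sits exactly where you point at it yourself: the spaces you describe cannot work as described, and the ``main obstacle'' you defer is in fact the entire proof. If $X_i$ is built by attaching a distinguished point $\infty_i$ (plus column tops) to $\omega\times\omega$, then the points of $\omega\times\omega$ are isolated in $X_i$, hence $X_1\times X_2$ has a countable dense set of isolated points and is trivially M-separable: every dense set contains every isolated point, so a finite selection hitting each isolated point in turn is automatically dense. You acknowledge this and say the spaces ``should have few (ideally no) isolated points,'' but removing the isolated points while keeping both spaces Fr\'echet and keeping control of which sequences converge is precisely the hard part. The paper's construction does this by taking both spaces to be refinements of the rational topology on $\omega$ itself, built by a recursion of length $\mathfrak c$ (this is where $\mathfrak b=\mathfrak c$ enters, via Propositions \ref{alpha1} and \ref{frechet} and a scale $\{f_\alpha:\alpha<\mathfrak c\}$); the dense sets $D_n$ are the graphs of $\rho\restriction E_n$ for a suitable bijection $\rho$, and non-M-separability is obtained not by a failure of density at a single corner point but by forcing \emph{every} set meeting each $D_n$ finitely to be closed and discrete in the (crowded) product.

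You have also inverted the roles of conditions (4) and (5) of tightness, and your proposed contradiction does not go through as stated. Condition (5) only asserts that $X\cap a_f$ and $X\cap b_f$ are both infinite; it does not cover the projection of $\bigcup_n H_n$ by $a_f\cup b_f$, so it cannot produce a neighborhood of $(\infty_1,\infty_2)$ missed by the selection. Worse, in the ideal-space topology you describe, $a_f\in\mathcal I_1$ is a sequence \emph{converging} to $\infty_1$, so $(\omega\times\omega)\setminus a_f$ is not a neighborhood of $\infty_1$. In the paper the decomposition $f_\alpha^\downarrow=a_\alpha\cup b_\alpha$ of condition (4), combined with the scale, is what destroys density of selections: a candidate selection $H$ meets each $D_n$ finitely, hence lies mod finite under some $f_\alpha^\downarrow$, and one discretizes $\pi_1[H\cap B_\alpha]$ in the first factor and $\pi_2[H\cap A_\alpha]$ in the second. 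Condition (5) is used for the opposite purpose from the one you assign it: it is the tool that \emph{preserves the Fr\'echet property} through this recursion, by guaranteeing that a set meeting infinitely many columns infinitely has an infinite, convergent intersection with some $A_\delta$, so that discretizing the $B_\alpha$-parts never destroys all convergent subsequences of a set accumulating at a point. Without these two mechanisms your outline has no mechanism for either half of the theorem.
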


\begin{proof} Assume that $\mathfrak b = \mathfrak c$ and that
$\mathcal I_1,\mathcal I_2$ form a tight $\omega^\omega$-gap. 
Let $\{ f_\alpha : \alpha \in \mathfrak c\}\subset \omega^\omega$ 
be   a standard scale in the sense that, each $f_\alpha$ is a strictly increasing
function, $f_\alpha<^*f_\beta$ for any $\alpha < \beta < \mathfrak c$,
and for all $f\in \omega^\omega$ there is an $\alpha<\mathfrak c$
such that $f\leq f_\alpha$. 

For each $\alpha<\mathfrak c$, let $a_\alpha\in \mathfrak I_1$
and $b_\alpha\in \mathfrak I_2$ be disjoint sets such that
 $a_\alpha\cup b_\alpha = f_\alpha^{\downarrow}$. 
 \medskip

We start our construction as in   \cite{BFZ}, and similar to \cite{MA3}.
 Let $\tau_0=\sigma_0$ be any countable clopen base for a topology
 on $\omega$ that is homeomorphic to the rationals. 
 Fix a partition $\{ E_n : n\in\omega\}$ of $\omega$ so that
 each $E_n$ is $\tau_0$ dense. Now choose any bijection $\rho$
 on $\omega$ satisfying that, for each $n\in\omega$,
   the graph of $\rho\restriction E_n$, namely $D_n =
   \{ (k, \rho(k)) : k\in E_n\}$
   is a dense subset of $\omega\times\omega$ with respect to the 
   product topology $\tau_0\times \sigma_0$. 
   Observe that $\rho[E_n]$ is dense with respect to $\sigma_0$.
   Let $D =\bigcup_n E_n = \{ (k, \rho(k)) : k\in \omega\}$,
   i.e. $D$ is the graph of $\rho$.   We will let
   $\pi_1$ denote the first coordinate projection on $\omega\times\omega$
   and $\pi_2$ the second coordinate projection.
   
   Choose any countable elementary submodel $M_0$ of 
    $H(\mathfrak c)$ such that each of 
     $\{    \{E_n : n\in\omega\}, \rho, \tau_0\}\in M_0$.
     This is simply a convenient way of choosing a good
     starting family of converging sequences from
     each of $\tau_0$ and $\sigma_0$.  Let
     $\mathcal I_0$ be all sets $I\in M_0$
such     that, for some $n\in\omega$, $I$ is a subset
     of $E_n$ and $\tau_0$-converges.
     Similarly let $\mathcal J_0$ be all sets
     $J\in M_0$ that, for some $n\in \omega$,
      $J$ is a subset of $\rho[E_n]$ and $\sigma_0$-converges.

      Let us note here that if $\tau'\supset \tau_0$
      and $
      \sigma'\supset \sigma_0$ are larger
      bases for topologies that preserve that 
      $\mathcal I_0$ and $\mathcal J_0$ respectively remain
      converging, then, for each $n\in\omega$,
         $D_n = \rho\restriction E_n$ is dense in the product
         topology. To see this let $U\in \tau'$ and $W\in \sigma'$.
         Choose any $m\in U$ and $k\in W$. In $M_0$ choose an infinite 
         sequence
         $S\subset D_n$ such that $S$ converges to $(m,k)$.
         Therefore $S = \{ (i,\rho(i)) : i\in I\}$ for some
         $I\in M_0$. Note that  $I\in \mathcal I_0$ and converges
         to $m$. Similarly,     $\rho[I] = J$ is an element of $\mathcal J_0$
           and $J$ converges to $k$. By assumption $I$ is almost
           contained in $U$ and $J$ is almost contained in $W$. 
           Of course this implies that $U\times W$ almost contains
           $S\subset D_n$.\medskip

Choose a bijection $\psi : \omega\times \omega \rightarrow \omega$
such that $\psi(\{n\}\times \omega) = E_n$. 
For each $\alpha<\mathfrak c$, let $A_\alpha = \rho(\psi(a_\alpha))$
and $B_\alpha = \rho(\psi(b_\alpha))$. Now we have 
that the ideals generated by $\{ \rho[E_n] : n\in \omega\}$,
 $\{ A_\alpha : \alpha < \mathfrak c\}$, and 
  $\{B_\alpha : \alpha < \mathfrak c\}$ are orthogonal ideals
  on $D$ whose union is dense in $D$. If $X$ is a subset
  of $D$ that meets infinitely many of the elements
  of $\{ \rho[E_n] : n\in\omega\}$ in an infinite set,
   then there is an $\alpha<\mathfrak c$ such that
    $X\cap A_\alpha$ and $X\cap B_\alpha$ are both infinite.

   We will recursively construct increasing chains, 
    $\{\tau_\alpha : \alpha < \mathfrak c\}$ and
     $\{ \sigma_\alpha : \alpha < \mathfrak c\}$ 
     of clopen bases of cardinality less than $\mathfrak c$
     for topologies on $\omega$. We will also, simultaneously
     choose increasing chains, $\{
     \mathcal I_\alpha : \alpha <\mathfrak c\}$
     and   $\{\mathcal J_\alpha : \alpha < \mathfrak c\}$,
      of sequences that must converge in $\tau_\beta$, 
      respectively $\sigma_\beta$, for all $\beta < \mathfrak c$.
      Naturally the purpose of choosing these chains of
      sets of converging sequences is to ensure that
      each of $(\omega,\tau_{\mathfrak c})$
      and $(\omega,\sigma_{\mathfrak c})$ are Fr\'echet as
      witnessed by $\mathcal I_{\mathfrak c}$ and
       $\mathcal J_{\mathfrak c}$ respectively.

       As mentioned above, with these inductive assumptions,
       we will have ensured that each member
       of the sequence $\{ D_n : n\in\omega\}$
       remains 
   $\tau_\alpha\times \sigma_\alpha$-dense for
        every $\alpha$.
        The next goal is to ensure that  
        if $H\subset D$ satisfies
        that $H\cap D_n $ is finite for all
         $n$, then $H$ is closed and discrete.
         This of course ensures that the product
         is not M-separable. 
    Here is the plan for ensuring that such an
     $H$ is closed and discrete. Notice that
     there will be an $\alpha<\mathfrak c$
     such that $H\subset^* A_\alpha\cup B_\alpha$.
     We will ensure that the first coordinate projection,
     $\pi_1(H\cap B_\alpha)$, is 
     closed and discrete in $(\omega,\tau_{\alpha+1})$
     and that the second coordinate projection,
     $\pi_2[H\cap A_\alpha]$, is closed
     and discrete in $(\omega,\sigma_{\alpha+1})$.
     To ensure this is possible, we will necessarily 
     also have 
     the inductive hypotheses that
    if $I\in\mathcal I_\alpha$ is almost
    disjoint from each $E_n$, then
     $\rho\restriction I $ is contained in
     some $A_\gamma$.
     Similarly, if $J\in \mathcal J_\alpha$
     and is almost disjoint from 
     each $\rho[E_n]$, then $\rho^{-1}\restriction J
      = \{ (\rho^{-1}(j) , j) : j\in J\}$ is 
      almost contained in some  $B_\gamma$.
      These conditions are vacuously true for $\mathcal I_0$
      and $\mathcal J_0$.
     
Fix an enumeration $\{ X_\xi : \xi < \mathfrak c,\ \xi\ \mbox{a limit} \}$ of the
infinite subsets of $\omega$. Let $0<\lambda < \mathfrak c$ and
assume that we have constructed the following increasing sets
$\{ \tau_\alpha : \alpha <\lambda\}$, 
 $\{ \sigma_\alpha : \alpha < \lambda \}$,
 $\{\mathcal I_\alpha : \alpha < \lambda\}$,
 and 
  $\{\mathcal I_\alpha : \alpha < \lambda\}$ 
satisfying the following
 inductive assumptions for all $\beta < \alpha <\lambda$:
 \begin{enumerate}
 \item every $I\in \mathcal I_\alpha$ is a $\tau_\alpha$-converging,
 \item every $J\in \mathcal J_\alpha$ is a $\sigma_\alpha$-converging,
 \item for each $I\in \mathcal I_\alpha$, the graph
 $\rho\restriction I$  is mod finite contained in $D_n\cup A_\gamma$
 for some $n\in\omega$ and $\gamma <\mathfrak c$,
 \item for each $J\in \mathcal J_\alpha$, the set
  $\rho^{-1}\restriction J = \{ (\rho^{-1}(j),j) : j\in J\}$ is
  mod finite contained in $D_n\cup B_\gamma$ for some
  $n\in\omega$ and $\gamma < \mathfrak c$,
  \item the set $\pi_1[B_\beta]  $
    is closed and discrete with respect to $\tau_\alpha$,
    \item the set $\pi_2[A_\beta]$ 
    is closed and discrete with respect to $\sigma_\alpha$,
\item if $\beta$ is  a limit 
and $\beta+m  < \alpha$,
 then if $m$ is a  $\tau_\alpha$-limit point of $X_\beta$, 
           there is an $I\in \mathcal I_\alpha$ converging to $m$
           such that              $I\subset X_\beta$,
\item if $\beta$ is a limit and $\beta+k\leq \alpha$,
 then if  $k$ a $ {\sigma_\alpha}$-limit point of $ X_\beta$,
           there is a $J\in \mathcal J_\alpha$ converging to $k$
           such that              $J\subset X_\beta$.
 \end{enumerate}

 If $\lambda$ is a limit ordinal, then the inductive hypotheses
 are satisfied by simply taking unions: 
 $\tau_\lambda = \bigcup_{\alpha<\lambda}\tau_\alpha$,
  $\sigma_\lambda = \bigcup_{\alpha<\lambda}\sigma_\alpha$,
  $\mathcal I_\lambda = \bigcup \{ \mathcal I_\alpha: \alpha < \lambda\}$,
and   $\mathcal J_\lambda = \bigcup \{ \mathcal J_\alpha : \alpha<\lambda\}$.

Now suppose that $\lambda = \alpha{+}1$ and, if $\omega\leq \alpha$,
let $\beta$ be the largest limit below $\lambda$ and let
 $\beta + \bar m +1 = \lambda$.  
 There are two tasks for each 
of $\tau_{\lambda}$ and $\sigma_{\lambda}$ to deal with $X_\beta$,
 $A_\alpha$ and $B_\alpha$ as in items (5)-(8).  
 These are done independently, but symmetrically, for 
  $\tau_\lambda, \mathcal I_\lambda$ and $\sigma_\lambda,\mathcal J_\lambda$,
  so we just provide the construction for $\tau_\lambda$ and $\mathcal I_\lambda$.
 
 Let us
 first consider the closure of $X_\beta$ with respect to
 $\tau_\alpha$. For each $m\in\omega$ for which there
 is a sequence $I\subset X_\beta$ that converges to
  $m$ and satisfies that $\rho\restriction I \subset E_n\cup A_\gamma$
  for some $n\in\omega$ and $\gamma <\mathfrak c$, ensure
  there is such an $I\in \mathcal I_\lambda$. 
Let $ X_\beta^{(1)_\lambda}$ denote the set $ X_\beta$ together
with all the points that are   Fr\'echet limits with respect
to $\mathcal I_\lambda$. 
Naturally this step is only required at stage $\lambda = \beta+1$.
Note that it follows from 
Proposition \ref{frechet} that $X_\beta^{(1)_\lambda}$ is 
almost disjoint from every $I\in\mathcal I_\lambda$
such that $I$ converges to a point not in $X_\beta^{(1)_\lambda}$.

Apply Lemma \ref{alpha1} to choose, for each $m\in \omega$
a sequence $S_m$ that $\tau_0$ converges to $m$ and satisfies
that $I\subset^* S_m$ for all $I\in\mathcal I_\lambda$ that 
converge to $m$. Clearly the family $\{ S_m : m\in\omega\}$
is almost disjoint, and so by removing a finite set from
each, we will assume they are pairwise disjoint. 
A second reduction is that we can replace each $S_m$
by  $S_m\setminus \pi_1[B_\lambda]$ since, by our inductive
assumptions, each $I\in \mathcal I_\lambda$ is almost
disjoint from $\pi_1[B_\lambda]$. Our third, and final reduction,
is that for each $m\notin X_\beta^{(1)_\lambda}$,   
we can assume that $S_m\cap X_\beta^{(1)_\lambda}$ is empty, but
this needs a proof since we can not apply Proposition  \ref{frechet} directly
because of the new restriction that we must respect the $\omega^\omega$-gap.

Assume that there is a sequence $\{ s_n: n\in\omega\}
\subset X_{\beta}^{(1)_\lambda}$ that converges to $m$.
We prove that $m$ is also in $X_\beta^{(1)_\lambda}$.
For
each $n\in\omega$, fix a sequence $I_n$ that converges to
 $n$ and such that $I_n\subset X_\beta$ and, by definition
 of $X_\beta^{(1)_\lambda}$, 
 either $I_n\subset E_{k_n}$  (case 1) or   $I_n\subset \pi_1[A_{\gamma_n}]$ (case 2). 
 By passing to a subsequence of $\{ s_n : n\in\omega\}$ we may
 assume that either,   for all $n$, $I_n\subset E_{k_n}$ (case 1) or
   for all $n$, $I_n\subset \pi_1[B_{\gamma_n}]$ for some $\gamma_n <\mathfrak c$
   (case 2).

Since it is easier, we complete the proof for case 2 first.
Choose any $\gamma <\mathfrak c$ so that $\gamma_n <\gamma$ for all $n$.
 By removing  finite subset from each $I_n$ we may assume that
  $\bigcup_n I_n\subset \pi_1[A_\gamma]$. Now the character of $m$
  in the subspace $\{m\}\cup \{s_n :n\in\omega\}\cup \{ I_n : n\in\omega\}$
  is less than $\mathfrak b$ and if we set $A = \bigcup_n I_n$, we
  can apply Proposition \ref{frechet} to conclude there is a 
  sequence $I\subset \bigcup_n I_n$ that converges to $m$. This
  implies that $m$ is in $X_\beta^{(1)_\lambda}$. 

Now we deal with case 1.
If there is a $k$ so that $k_n = k$ for infinitely many $n$,
 then clearly, by Proposition \ref{frechet}, there is a sequence
contained in $X_\beta\cap E_{k}$ that converges to $m$, showing
that $m\in X_{\beta}^{(1)_\lambda}$. 
Finally, again by passing to a subsequence, we may assume
that $\{ k_n : n\in\omega\}$ is strictly increasing.
Let $\{ U_\xi : \xi < \lambda\}$ enumerate the neighborhood
base at $m$ with respect to the topology $\tau_\alpha$. 
For each $\xi<\lambda$, there is a function $h_\xi\in\omega^\omega$
so that $I_n\setminus h_\xi(n)\subset U_\xi$ for all but finitely
many $n\in\omega$. Since $\lambda<\mathfrak b$, there is a
$\gamma<\mathfrak c$ such that, for all $\xi <\lambda$, 
$I_n\setminus f_\gamma(k_n)$ is a subset of $U_\xi$
for all but finitely many $n\in\omega$. Let $X =\bigcup_n 
\rho[I_n\setminus f_\gamma(k_n)]$
and note  $X$ is a subset that meets infinitely many of
the elements of $\{ \rho[E_k] : k\in\omega\}$ in an infinite
set.  By the assumption on the $\omega^\omega$-gap, 
 there is a $\delta<\mathfrak c$ such that $X\cap A_\delta$
 is infinite. Since $A_\delta\cap \rho[I_n]$ is finite
 for every $n$, it follows that $X\cap A_\delta$ is 
 mod finite contained in $\rho[U_\xi]$ for every $\xi<\lambda$.
 Equivalently, $\pi_1[X\cap A_\delta]\subset X_\beta$ is a sequence 
 that converges to $m$ with respect to $\tau_\alpha$, showing
 that $m\in X_\beta^{(1)_\lambda}$.
\medskip

Now we construct countably many new clopen sets to
add to $\tau_\lambda\supset \tau_\alpha$ by defining
a function $g:\omega\mapsto \omega$ and adding
$g^{-1}(k)$ to $\tau_\lambda$ for each $k\in\omega$.
Then let $\tau_\lambda$ be closed under finite intersections.

 Let $g_0$ be any 1-to-1 function from $\bar m\cup \pi_1[B_\alpha]$
 into $\omega$. For convenience choose $g_0(\bar m)=0$.
 We define $g$  as $\bigcup g_n$ where for each
 $n\in\omega$, $\dom(g_n)$ equals $\{\bar m\}\cup 
 \pi_1[B_\alpha]\cup n \cup \bigcup\{S_m: m<n\} $.
 Note that with this assumption, $\dom(g_n)$ is almost disjoint
 from $S_\ell$ for all $\ell\geq n$. Two additional
 inductive assumption are that for each $m,j\in \dom(g_n)$,
 \begin{enumerate}
  \item  if $m<n$, then $g(s)=g(m)$ for all but finitely many $i\in S_m$,
\item     $j\in X_\beta^{(1)_\lambda}$  
then $g_n(j)\neq 0=g_n(\bar m)$.  
\end{enumerate}
The first inductive assumption on $g$ ensures that every member
of $\mathcal I_\lambda$ will be $\tau_\lambda$-converging,
and the second ensures that $g^{-1}(0)$  
is a $\tau_\lambda$-neighborhood of $\bar m$ that is disjoint from
  $X_\beta $.

Our definition of $g_0$ ensures that $\pi_1[B_\alpha]$ is closed
and discrete.    Assume then that
$g_n$ has been defined and note that the inductive assumptions
ensure that the range, $R_n$, of $g_n\restriction (\dom(g_n)\setminus \pi_1[B_\alpha])$ 
is finite. If $n$ is not in $\dom(g_n)$, then define $g_{n+1}(n)$ to be
any value not in $R_n$.  So long as $g_{n+1}(n)\neq 0$,
then simply    
define $g_{n+1}(i)=g_{n+1}(n)$ for all $i\in S_n\setminus
\dom(g_n)$.  
This choice of $g_{n+1}\restriction S_n$ preserves both the inductive
hypotheses.  
If $g_{n+1}(n)=0$, then it is because $g_n(n)=0$
and by the induction hypothesis, $n\notin X_\beta^{(1)_\lambda}$
and $S_n$ is also disjoint from $X_\beta^{(1)_\lambda}$.
For these reasons,  we may again define $g_{n+1}(i)=0$
for all $i\in S_{n}\setminus \dom(g_n)$ and preserve
the induction hypotheses.
\medskip

Let us verify that $(\omega, \tau_{\mathfrak c})$ is Fr\'echet,
where $\tau_{\mathfrak c} = \bigcup_{\alpha<\mathfrak c} \tau_\alpha$.
Consider any $\bar m\in\omega$ and subset $X_\beta\subset\omega$
for some limit $\beta\in\mathfrak c$. If $\bar m$ is 
in the set $X_\beta^{(1)_{\beta+1}}$, then there is a sequence
 $I\in \mathcal I_{\beta+1}$ that is contained in $X_\beta$
 and converges to $\bar m$. Otherwise,  by induction
 hypothesis (7), $\bar m$ is not in the $\tau_{\beta+\omega}$-closure
 of $X_\beta$. 
\end{proof}
 
\section{Products which are M-separable}

In this section we prove that in standard models of weak forms of Martin's Axiom
 products of two countable Fr\'echet spaces are M-separable. 
  It is known that this holds in models of OCA but we are interested
  in models in which $\mathfrak c$ is larger than $\omega_2$. It is
  shown in \cite{BaDo2} that this also holds
  in all standard Cohen real forcing extensions. We do not know
  if this can hold in models of Martin's Axiom with $\mathfrak c>\omega_2$,
   so we make do with 
   Martin's Axiom for $\sigma$-linked posets (\cite{AvTo11})
   as was used for products of three Fr\'echet spaces in \cite{MA3}.

\begin{theorem}  Let $\kappa>\omega_1$ satisfy that $\kappa^{<\kappa}=\kappa$.
Then if   $P_\kappa$ is\label{second}  is the standard finite
support iteration of length $\kappa$  consisting of factors
that are names of  $\sigma$-linked posets, then in the forcing
extension any product of two countable Fr\'echet spaces is M-separable.
\end{theorem}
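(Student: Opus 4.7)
The proof proceeds by contradiction, leveraging the specific structure of the iteration. Suppose that in the $P_\kappa$-extension there exist countable Fr\'echet spaces $X,Y$ on $\omega$ together with a countable family $\{D_n : n\in\omega\}$ of dense subsets of $X\times Y$ for which no M-separating selection exists. Since $P_\kappa$ is a finite-support ccc iteration of length $\kappa$ with $\kappa^{<\kappa}=\kappa$, a standard reflection argument yields an $\alpha<\kappa$ such that the topologies on $X$ and $Y$, the family $\{D_n\}$, and enough Fr\'echet witnesses (a family of converging sequences realizing the relevant closures) already appear at stage $\alpha$. From that stage on we may work in $V[G_\alpha]$, where the relevant objects have cardinality less than $\kappa$, so Propositions \ref{alpha1} and \ref{frechet} apply to furnish tame neighborhood bases within this intermediate model.

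The strategy is then to build, in $V[G_\alpha]$, a $\sigma$-linked poset $Q$ whose generic produces the missing M-separating selection. Conditions in $Q$ would be pairs $(h,\mathcal F)$, where $h$ consists of finite choices $H_n\in[D_n]^{<\omega}$ for $n$ in a finite subset of $\omega$, and $\mathcal F$ is a finite set of basic open sets that the condition has promised to meet; extensions refine $h$ and enlarge $\mathcal F$. For each point $p\in X\times Y$, a neighborhood base of size less than $\kappa$, obtained by applying Proposition \ref{alpha1} to the fewer than $\kappa$ converging sequences available in $V[G_\alpha]$, yields a family of dense subsets of $Q$ of cardinality less than $\kappa$. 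Since $Q$ has cardinality less than $\kappa$ and appears as an iterand at some stage $\beta\in[\alpha,\kappa)$, the corresponding generic adds the desired selection, contradicting the assumed failure.

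The central difficulty is demonstrating $\sigma$-linkedness of $Q$, since the naive partition by the shape of $h$ leaves genuine conflicts in the $\mathcal F$-coordinates. Under OCA the argument of \cite{MA3} handled this via the Luzin-gap characterization of unseparated orthogonal ideals, which collapsed the incompatible coverage promises into a small, tractable structure. Here one must instead argue directly from the Fr\'echet hypothesis: combine the strengthened $\alpha_1$-property of Proposition \ref{alpha1} with the orthogonality between convergence in $X$ and in $Y$ to exhibit countably many classes of conditions, in each of which any two members admit a common refinement. Carrying out this linked decomposition, in a spirit reminiscent of the Laver-style bookkeeping described for Lemma \ref{tight}, is the main technical challenge of the proof.
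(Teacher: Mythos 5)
There is a genuine gap, and it sits exactly where you place it: the $\sigma$-linkedness of your selection poset $Q$ is not proved, and it cannot be proved from the ingredients you invoke. Section 3 of this paper shows that in a model of full Martin's Axiom (hence of $\operatorname{MA}(\sigma\mbox{-linked})$) containing a tight $\omega^\omega$-gap, there are two countable Fr\'echet spaces whose product is not M-separable. If a poset of finite partial selections with neighborhood promises were $\sigma$-linked (or even ccc) as a consequence of ``Fr\'echetness plus Proposition \ref{alpha1} plus orthogonality,'' then $\operatorname{MA}(\sigma\mbox{-linked})$ would already force the dense selection in that model, contradicting the first main theorem. So any correct proof must use something beyond the intermediate model $V[G_\alpha]$; the obstruction you defer to the end is precisely the content of the theorem. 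A secondary but real problem is your reflection step: a countable Fr\'echet space in the final model can have weight $\mathfrak c=\kappa$, so its topology does \emph{not} appear at any stage $\alpha<\kappa$; only the dense sets $D_n$ and individual converging sequences reflect. Theorem \ref{technical} is formulated exactly to accommodate this, by fixing ground-model families $\mathcal I,\mathcal J$ of converging sequences and allowing arbitrary final topologies $\hat\tau,\hat\sigma$ in which those sequences still converge.

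The paper's route avoids designing any new poset. Theorem \ref{technical} shows that whenever a $\sigma$-linked iterand $Q$ adds a dominating real $f_Q$, the canonical selection $H_{Q}=\bigcup_n D_n\cap(f_Q(n)\times f_Q(n))$ is automatically dense in the eventual product topology. Density at $(x,y)$ is proved by contradiction: a neighborhood $U\times W$ meeting $H_Q$ finitely would make $H_Q\cap(U\times\omega)$ split the orthogonal families $\{I(f)\}$ and $\{J(f)\}$ indexed by the set $X$ of triples $(I,J,f)$. The combinatorial core (the tree argument producing $U^k_n, W^k_n$ and the branch $s$) shows the open coloring $G$ on $X$ admits no countable $G$-free cover, so Corollary \ref{properOCA} yields an \emph{auxiliary proper poset} $P$ forcing an uncountable $G$-homogeneous set, which is a Luzin gap among the pairs $(I_\alpha(f_\alpha),J_\alpha(f_\alpha))$; since $P$ preserves that $Q$ is $\sigma$-linked, $P\ast Q$ is proper, and by Proposition \ref{nosplit} no $Q$-name can split the gap. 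This is the actual role of the $\sigma$-linked hypothesis --- preservation of ccc-ness of the tail under the auxiliary proper forcing --- not the linkedness of a selection poset. In effect the proof imports the OCA/Luzin-gap mechanism of \cite{MA3} \emph{generically}, via Corollary \ref{properOCA}, rather than ``arguing directly from the Fr\'echet hypothesis'' as you propose; your plan as written has no substitute for this step.
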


The following is a direct consequence of
\cite{StevoPPT}*{Theorem 4.4}, see also 
\cite{Justin}*{Lemma 1}.  It is the key result behind the proof in \cite{StevoPPT}*{Theorem 8.0}
that PFA implies OCA.

\begin{lemma}[CH] If $X$ is a separable metric space and $G\subset X^2\setminus\Delta_X$ is a 
symmetric open\label{preOCA} relation on $X$, 
then either there is a countable cover, $\mathcal Y$, of $X$ by sets
 $Y\in\mathcal Y$ satisfying that $Y^2\cap G$ is empty, or
 the poset $Q=\{ F\in [X]^{<\aleph_0} : F^2\setminus \Delta_F \subset G\}$ is
 ccc when ordered by reverse inclusion.
\end{lemma}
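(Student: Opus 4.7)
The plan is to prove the contrapositive under CH: assuming $X$ is not covered by countably many $G$-free sets, show that every uncountable subfamily of $Q$ contains a compatible pair, so no uncountable antichain exists. Given $\{F_\alpha : \alpha < \omega_1\} \subseteq Q$, I would first apply the $\Delta$-system lemma to obtain an uncountable refinement in which the $F_\alpha$ are pairwise disjoint of common size $n$; the common root, being itself a $G$-clique, can be absorbed without affecting compatibility. Writing $F_\alpha = \{x_\alpha^0,\ldots,x_\alpha^{n-1}\}$ and using the countable base $\mathcal{B}$ of $X$ together with the openness of $G$, choose for each $\alpha$ basic open sets $U^0_\alpha,\ldots,U^{n-1}_\alpha \in \mathcal{B}$ with $x_\alpha^i \in U^i_\alpha$ and $U^i_\alpha \times U^j_\alpha \subseteq G$ for all $i\neq j$. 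Pigeonholing over the countably many $n$-tuples from $\mathcal{B}$ produces an uncountable $A \subseteq \omega_1$ on which $(U^0_\alpha,\ldots,U^{n-1}_\alpha) = (U^0,\ldots,U^{n-1})$ is constant. For $\alpha,\beta \in A$, the cross-coordinate pairs $(x_\alpha^i,x_\beta^j)$ with $i\neq j$ lie automatically in $G$, so compatibility of $F_\alpha$ and $F_\beta$ reduces to the same-coordinate condition $(x_\alpha^i,x_\beta^i) \in G$ for every $i < n$.

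The combinatorial problem becomes: in $\{\vec x_\alpha \in U^0 \times \cdots \times U^{n-1} : \alpha \in A\}$, find $\alpha \neq \beta$ with $(x_\alpha^i,x_\beta^i) \in G$ for all $i$. Assuming for contradiction no such pair exists, color $[A]^2$ by the lexicographically least coordinate $i$ such that $(x_\alpha^i,x_\beta^i) \notin G$, and apply the Erd\H{o}s--Dushnik--Miller partition relation $\omega_1 \to (\omega_1,\omega)^2$ iteratively over the $n$ colors. This yields an uncountable $B \subseteq A$ and a fixed $i_0 < n$ for which $\{x_\alpha^{i_0} : \alpha \in B\}$ is an uncountable $G$-free subset $Y$ of $U^{i_0}$; handling the other coordinates by induction on $n$ reduces us to the one-coordinate situation of a single uncountable $G$-free set $Y \subseteq X$.

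The main obstacle, and the step requiring CH in an essential way, is to promote one uncountable $G$-free set into a countable $G$-free cover of all of $X$, thereby contradicting the failure of (a). Following the scheme of \cite{StevoPPT}*{Theorem 4.4}, one first notes that $\overline Y$ is again $G$-free (if $(x,y) \in G \cap \overline Y^2$ with $x\neq y$, openness of $G$ and disjoint neighborhoods would produce a $G$-edge inside $Y$), so $\overline Y$ serves as one element of the cover and attention passes to the open complement $X \setminus \overline Y$. Under CH, $|X| \leq \aleph_1$, so an enumeration-driven transfinite recursion of length $\omega_1$ iterates the antichain-to-$G$-free-set extraction: at each stage the refined antichain on the residue furnishes a new closed $G$-free piece to add. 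A bookkeeping argument, exploiting that the witnessing data on each stage is encoded by a finite tuple from the countable base $\mathcal{B}$, collapses the $\omega_1$-many pieces into countably many distinct $G$-free sets whose union covers $X$. The resulting countable $G$-free cover contradicts our assumption that (a) fails, so $Q$ must be ccc.
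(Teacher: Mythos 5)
Your opening reductions are fine and are exactly how every proof of this dichotomy begins: the $\Delta$-system refinement, absorbing the root, fixing basic open sets $U^0,\dots,U^{n-1}$ with $U^i\times U^j\subseteq G$ for $i\neq j$, and the observation that compatibility of $F_\alpha$ and $F_\beta$ then reduces to the diagonal condition $(x^i_\alpha,x^i_\beta)\in G$ for all $i<n$. The argument breaks at the next step. To extract an uncountable $G$-free set from the assumed antichain you would need an uncountable set monochromatic for one of the $n$ colours, i.e.\ the partition relation $\omega_1\rightarrow(\omega_1)^2_n$. Dushnik--Miller only gives $\omega_1\rightarrow(\omega_1,\omega)^2$, so iterating it over the colours leaves you, in all but one colour, with an \emph{infinite} rather than uncountable homogeneous set, and $\omega_1\rightarrow(\omega_1)^2_2$ is false outright (Sierpi\'nski). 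The only hope of rescuing the extraction is to exploit that each colour class is a closed symmetric relation on a separable metric space --- but ``an uncountable set for an open/closed partition admits an uncountable homogeneous set'' is itself an OCA-type assertion, and under CH (the hypothesis in force here) such assertions fail. So the passage from an uncountable antichain to a single uncountable $G$-free set is a genuine gap, not a routine Ramsey step.

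The last paragraph contains a second, independent gap. Even granting one uncountable $G$-free set $Y$, you must contradict the assumption that $X$ has no \emph{countable} $G$-free cover. Your transfinite recursion produces at best $\omega_1$-many closed $G$-free pieces, which is no contradiction; these pieces are closures of uncountable sets and are certainly not encoded by finite tuples from $\mathcal B$, so there is no reason only countably many distinct ones arise, and the proposed collapse to a countable family is unjustified. The recursion also stalls: nothing guarantees that the residue at a given stage still carries an uncountable antichain to feed another extraction. The actual proof (Todor\v{c}evi\'c \cite{StevoPPT}, see also \cite{Justin}; the present paper cites it rather than reproving it) does not pass through an uncountable $G$-free set at all. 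One first removes the largest open $\sigma$-$G$-free subset of $X$ (a countable union of basic open sets, by second countability), so that every nonempty relatively open subset of what remains fails to be $\sigma$-$G$-free, and then uses CH in a closing-off/elementary-submodel argument to find, for a suitably generic member of the refined family, a second member compatible with it; the hypothesis ``no countable $G$-free cover'' enters as a density statement guaranteeing such a partner exists, not as a source of Ramsey-theoretic homogeneity.
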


Therefore, using the standard countably closed collapsing trick
 (see again \cite{StevoPPT}*{Theorem 8.0}) and the fact that the iteration
 of a countably closed poset and a ccc poset is proper we
 have the same result in a more convenient form.

\begin{corollary}
 If $X$ is a separable metric space and $G\subset X^2\setminus\Delta_X$ is a 
symmetric open\label{properOCA} relation on $X$ for which
there is no countable cover, $\mathcal Y$, of $X$ by sets
 $Y\in\mathcal Y$ satisfying that $Y^2\cap G$ is empty,
  then there is   a proper poset $P$ that forces there
  to be an uncountable set $Z\subset X$ satisfying
  that $Z^2\setminus \Delta_Z$ is a subset of $G$.
\end{corollary}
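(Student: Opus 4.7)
The plan is to apply the ``countably closed collapse'' trick indicated in the paragraph just above the statement: first force CH via a countably closed poset, then invoke Lemma~\ref{preOCA} in the extension, and finally iterate with the resulting ccc poset.

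First I would let $C = \mathrm{Coll}(\omega_1, \mathfrak c)$, so that $V^C$ satisfies CH. The hard part, and the step that deserves genuine verification, is to check that the hypothesis ``no countable $G$-free cover of $X$'' is preserved from $V$ to $V^C$. This rests on two absoluteness observations. Since $G$ is open, the closure of any $G$-free subset of $X$ is itself $G$-free, so any hypothetical cover in $V^C$ may be assumed to consist of closed sets. A closed subset of the separable metric space $X$ is coded by the subset of a fixed countable base $\mathcal B \in V$ whose union is its complement. Because $C$ is countably closed it adds no reals, hence no subsets of the countable set $\mathcal B$, and consequently no new closed subsets of $X$. A second application of countable closure places a countable sequence of such closed sets already in $V$. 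By absoluteness of the statement $\bigcup_n Y_n = X$, a countable $G$-free cover in $V^C$ would yield one in $V$, contradicting the hypothesis.

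With the hypothesis preserved and CH in hand, Lemma~\ref{preOCA} in $V^C$ yields that the poset $Q = \{F \in [X]^{<\aleph_0} : F^2 \setminus \Delta_F \subset G\}$, ordered by reverse inclusion, is ccc. I would then take $P := C \ast \dot Q$; the standard fact that a countably closed poset followed by (a name for) a ccc poset gives a proper iteration shows that $P$ is proper, which is exactly the kind of composition alluded to in the paragraph preceding the corollary.

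Finally, in $V^P$, I would let $Z := \bigcup \dot H$ for $\dot H$ the $\dot Q$-generic filter. By construction $Z^2 \setminus \Delta_Z \subseteq G$, and a standard density argument, using the ccc of $\dot Q$ in $V^C$ together with the preserved hypothesis to rule out any condition forcing $\dot Z$ into a ground-model countable set, gives $|Z| = \omega_1$ in $V^P$. Thus $P$ witnesses the corollary.
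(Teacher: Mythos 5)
Your overall architecture is the one the paper intends: collapse $\mathfrak c$ to $\omega_1$ with a countably closed poset, check that the hypothesis ``no countable $G$-free cover'' survives, apply Lemma~\ref{preOCA} in the extension, and use the properness of a countably closed poset followed by a ccc one. Your preservation argument is exactly the content hidden in the phrase ``countably closed collapsing trick'' and it is correct: closures of $G$-free sets are $G$-free because $G$ is open, closed subsets of $X$ are coded by subsets of a fixed countable base, and a countably closed poset adds neither new reals nor new $\omega$-sequences of ground model sets.

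The gap is in your final step. It is not true that the ccc-ness of $Q$ together with the preserved hypothesis rules out a condition forcing the union $Z$ of the generic filter into a countable (even finite) set. For example, let $X=2^\omega\cup\{p,q\}$ with $p,q$ isolated and $G=\bigl((2^\omega)^2\setminus\Delta_X\bigr)\cup\{(p,q),(q,p)\}$: then $G$ is open and symmetric, $X$ has no countable $G$-free cover, and $Q$ is ccc (it is a directed family together with three extra conditions), yet the condition $\{p,q\}$ has no proper extension, so below it the generic clique is $\{p,q\}$. Thus the dense sets $\{F: F\not\subseteq C\}$ you appeal to are simply not dense, and $C\ast\dot Q$ with $Q$ taken verbatim from Lemma~\ref{preOCA} does not witness the corollary. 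The missing idea --- which is the actual work in \cite{StevoPPT}*{Theorem 8.0} and \cite{Justin} --- is a preliminary refinement of $X$: let $U$ be the union of all basic open $B$ with $B\cap X$ $\sigma$-$G$-free and replace $X$ by $X'=X\setminus U$, which is still not $\sigma$-$G$-free and now has the property that every nonempty relatively open subset is non-$\sigma$-$G$-free, hence uncountable. Since the set of legal one-point extensions of a condition $F$ is the relatively open set $X'\cap\bigcap_{y\in F}G_y$ (where $G_y=\{x:(x,y)\in G\}$), this refinement, together with an argument that one may restrict to conditions whose link is nonempty, is what makes the uncountability of the generic clique go through. Without some such step the density argument fails, even though the corollary's conclusion happens to be trivially true in the example above.
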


Using this result and the method from \cite{MA3} we have this next
technical Lemma 
concerning products of Fr\'echet spaces.
We can loosely view it as forcing the product to be M-separable.

\begin{theorem} Let $(\omega,\tau)$ and $(\omega,\sigma)$ be Fr\'echet spaces. 
Assume that $\{ D_n : n\in\omega\}$ are dense subsets of $\omega\times\omega$
with respect\label{technical} to the product topology. Let $(x,y)\in \omega\times\omega$
be arbitrary and let $\mathcal I, \mathcal J$ be the family of all
sequences that $\tau$-converge, respectively $\sigma$-converge, to
 $x$ and $y$ respectively. 

If $Q$ is any $\sigma$-linked poset that adds a dominating real
 $f$, then in the forcing extension by $Q$, if $\hat\tau$ and 
 $\hat \sigma$ are topologies extending $\tau$ and $\sigma$ respectively
 satisfying that every member of $\mathcal I$ and $\mathcal J$ respectively
 remain as converging sequences, then $(x,y)$ is in the 
 closure of $H_f = \bigcup \{ D_n\cap \left( [0,f(n)]\times [0,f(n)]\right) : 
 n\in\omega\}$ with respect to the product topology given
 by $\hat\tau $ and $\hat \sigma $.
\end{theorem}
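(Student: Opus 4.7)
The strategy is to argue by contradiction. Suppose some $p \in Q$ forces the existence of offending $\hat\tau$- and $\hat\sigma$-neighbourhoods $\dot U \ni x$ and $\dot V \ni y$ such that $(\dot U \times \dot V) \cap H_{\dot f}$ is empty; the aim is to produce $p^* \leq p$ forcing that some $(a,b) \in D_n$ lies in $(\dot U \times \dot V) \cap H_{\dot f}$, directly contradicting the choice of $p$.

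The first step uses the $\sigma$-linked decomposition $Q = \bigcup_k Q_k$ to extract, for a suitable $k$ (and WLOG $k=0$ with $p \in Q_0$), ground-model sets $U_0, V_0$ that serve as $\tau$- and $\sigma$-open neighbourhoods of $x$ and $y$. Define $U_0 := \{m : (\exists q \in Q_0,\, q \leq p)\, q \Vdash m \in \dot U\}$ and symmetrically $V_0$. Pairwise linkedness of $Q_0$ forbids $Q_0$-witnesses from forcing contradictory statements, and combined with $p \Vdash I \subseteq^* \dot U$ for every $I \in \mathcal I$ this yields $U_0 \supseteq^* I$ for every $I \in \mathcal I$, as in the argument of \cite{MA3}. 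The Fréchet hypothesis on $(\omega, \tau)$ then promotes $U_0$ to a $\tau$-open neighbourhood of $x$, for otherwise some $I \in \mathcal I$ entirely in $\omega\setminus U_0$ would violate the almost-inclusion; symmetrically $V_0$ is a $\sigma$-open neighbourhood of $y$.

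The second step uses density of $D_n$ in the ground-model product $\tau \times \sigma$: for each $n$, select $(a_n, b_n) \in D_n \cap (U_0 \times V_0)$ and set $g(n) := \max(a_n, b_n) + 1$, a ground-model function in $\omega^\omega$. Fix $Q_0$-witnesses $q(a_n), q(b_n)$ forcing $a_n \in \dot U$ and $b_n \in \dot V$; by pairwise linkedness of $Q_0$ they have a common extension $r_n \in Q$ with $r_n \Vdash a_n \in \dot U \wedge b_n \in \dot V$. Since $\dot f$ is dominating, $r_n \leq p$ forces $\dot f \geq^* g$, and the maximal principle delivers $r_n^* \leq r_n$ and $N_n \in \omega$ with $r_n^* \Vdash (\forall m \geq N_n)\, \dot f(m) \geq g(m)$. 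Whenever the index $n$ can be chosen with $n \geq N_n$, the condition $r_n^*$ forces $(a_n, b_n) \in H_{\dot f} \cap (\dot U \times \dot V)$, yielding the contradiction.

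The hard part is coordinating the index $n$ with the threshold $N_n$, since $N_n$ depends on $r_n^*$, which in turn depends on $n$ via $r_n$. Following the method of \cite{MA3}, the remedy is to iterate: after each round, $r_n^*$ lies in some linked class $Q_{k^*}$, and the entire construction (ground-model neighbourhoods, witnesses, and target function $g$) is redone inside $Q_{k^*}$ below $r_n^*$, where pairwise linkedness again supplies the needed common extensions. The key point to verify --- and the step I expect to require the most care --- is that this iteration stabilises after finitely many stages; the underlying reason is that each successive target function is a ground-model function dominated by $\dot f$, and passing to a deeper linked class only shrinks the relevant neighbourhoods, so a single linked class eventually suffices to marry the membership witnesses with a tail-dominance witness.
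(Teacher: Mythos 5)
You are right that the crux is ``coordinating the index $n$ with the threshold $N_n$,'' but the two devices you lean on both fail, and the proposed repair does not close the gap. (i) The extraction of the ground-model neighbourhood $U_0$ from a single linked piece is not available: for a fixed $I\in\mathcal I$ the statement $p\Vdash I\subset^*\dot U$ only yields $I\subset^*\bigcup_k U_0^{(k)}$, where $U_0^{(k)}=\{m:(\exists q\in Q_k,\ q\leq p)\ q\Vdash m\in\dot U\}$; the witness for $m\in\dot U$ may lie in a different linked piece for each $m$, so no single $k$ gives $I\subset^* U_0^{(k)}$ even for one $I$, let alone uniformly over the (possibly continuum many) members of $\mathcal I$. (ii) Even granting $U_0$ and $V_0$, the final step needs a \emph{common} extension of three conditions: the two membership witnesses $q(a_n)$, $q(b_n)$ and a condition fixing a domination threshold $N\leq n$. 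Linkedness gives pairwise compatibility only, not a lower bound for a triple ($\sigma$-centered would, $\sigma$-linked does not), and your remedy of descending into deeper linked classes has no mechanism that forces stabilisation: each pass can move the witnesses into a fresh class indefinitely.

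More fundamentally, the direct density argument bypasses the actual obstruction, which is the family of pairs $(I,J)\in\mathcal I\times\mathcal J$ with $(x,y)$ not in the closure of $D\cap(I\times J)$; when that family is nonempty for all pairs, no amount of pointwise witness-amalgamation settles both coordinates at once. The paper's proof treats the easy case (some $D\cap(I\times J)$ accumulates at $(x,y)$) separately, and otherwise puts an open symmetric relation $G$ on the space $X$ of triples $(I,J,f)$, proves by a combinatorial tree argument that $X$ has no countable $G$-free cover, and invokes Corollary \ref{properOCA} to get a proper poset $P$ forcing an uncountable $G$-homogeneous set, i.e.\ a Luzin gap of pairs $(I_\alpha(f_\alpha),J_\alpha(f_\alpha))$. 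Since Luzin gaps cannot be split in any $\omega_1$-preserving extension, and since $\sigma$-linkedness of $Q$ (unlike mere ccc-ness) survives forcing with $P$ so that $P*Q$ is proper, no $Q$-name $\dot A$ can split all the pairs $(I(f),J(f))$; applying this to $A=H_Q\cap(U\times\omega)$ and $B=H_Q\cap(\omega\times W)$ yields that $H_Q\cap(U\times W)$ is infinite. Note in particular that the hypothesis of $\sigma$-linkedness is used in the paper only for this preservation step, which is an entirely different role from the pairwise amalgamation your argument asks of it; this is the missing idea, not a missing detail.
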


\begin{proof}
Let $x,y,\mathcal I$ and $\mathcal J$ be as described in the statement
of the Lemma.  If $x$ is isolated, then $\{x\}\times (\omega,\sigma)$ is M-separable,
and, in fact, there is a sequence $S\subset \{x\}\times \omega$ converging
to $(x,y)$ satisfying that $S$ is mod finite contained in 
 $\bigcup\{ D_n : n > m\}$ for all $m\in \omega$.
 Therefore we assume that neither $x$ nor $y$ are isolated and we
 choose infinite sequences $\langle x_n :n\in\omega\rangle$ converging
 to $x$ and $\langle y_n : n\in\omega\rangle$ converging to $y$.
 Fix pairwise disjoint families $\{U_n :n\in\omega\}$ 
 and $\{ W_n : n\in\omega\}$ of clopen sets in $\tau$ respectively $\sigma$
 so that, for all $n$, $x_n\in U_n$ and $y_n\in W_n$.

 Define the set $D = \bigcup \{ D_n \cap (U_n\times W_n) : n\in \omega\}$.
  Clearly $(x_n,y_n)$ is in the closure of $D$ for all $n\in\omega$.
  Assume there is some  $I\in \mathcal I$ and $J\in \mathcal J$
  satisfying that $(x,y)$ is in the closure of $D\cap (I\times J)$. 
  Since $\{(x,y)\}\cup \left(D\cap (I\times J)\right)$ is a metric
  space, it is M-separable, so again, there is sequence $S\subset D$
  that converges to $(x,y)$. By the choice of $D$, 
   $S\setminus \bigcup\{ D_m : m\geq n\}$ is finite for all $m$.

   So it remains to prove the Theorem in the case where
   $(x,y)$ is not in the closure of $D\cap (I\times J)$ for all $I\in 
   \mathcal I$ and $J\in \mathcal J$. Notice that this is equivalent
   to the case that for each such $I,J$ pair, we can remove a finite
   set from each and have that $D\cap (I\times J)$ is empty. 

  For each $f\in \omega^\omega$, let $H_f = \bigcup\{ D_n \cap (f(n)\times f(n)) : n\in\omega\}$
  as in the statement of the Theorem.
   Let $X$ be the following set
   $$ X = \{ (I,J,f) : I\in\mathcal I, J\in \mathcal J, f\in \omega^{\omega},
    D\cap (I\times J)=\emptyset\}~.$$
    We will identify $(I,J,f)$ from $X$ with the pair
     $(I(f), J(f))$ where
      $I(f) = H_f \cap (I\times \omega)$ and
       $J(f)= H_f\cap (\omega \times J)$. 
    For $(I,J,f)\in X$, $I(f)\cap J(f) \subset
        D\cap (I\times J)$ and so is empty.
We equip $X$ with the standard   topology on $(\mathcal P(\omega\times\omega))^2$
through this identification.     
The standard subbasic clopen subsets of $\mathcal P(\omega\times\omega)$
are sets of the form $\{ a\subset \omega\times\omega : (j,k)\in a\}$. 

Define the set $G\subset X^2\setminus \Delta_X$ by
the relation that $( ~ (I_1,J_1,f_1), (I_2,J_2,f_2)~)$ is in $G$
providing 
 $$I_1(f_1)\cap J_2(f_2)\neq\emptyset \ \ \mbox{or}\ \  
 I_2(f_2)\cap J_1(f_1)\neq\emptyset~~.$$ 
 It is trivial that $G$ is an open relation. 

 Suppose that $\{ Y_k : k\in \omega \} $ is a family of subsets
 of $X$ satisfying that $Y_k^2 $ is disjoint
 from $G$ for each $k\in\omega$. Assume towards a contradiction
 that $\bigcup_k Y_k = X$. 
   Let $X_0 $ be the set of pairs $(I,J)\in \mathcal I\times \mathcal J$
   that satisfy that $D\cap (I\times J)$ is empty. 
   For each $(I,J)\in X_0$, 
   since $\{(I,J)\}\times \omega^{\omega}$ is
   a subset of $\bigcup_k Y_k$,
there is a $k(I,J)\in\omega$ (minimal to be definite) so
   that $\{ f\in \omega^{\omega} : (I,J,f)\in Y_k\}$ 
   is $<^*$-cofinal in $\omega^\omega$. 
   Let $X_0[k] = \{ (I,J) : k(I,J) = k\}$ and observe
   that, since $Y_k^2\setminus \Delta_X$ is disjoint from $G$,
   it follows that
   for $(I_1,J_1),(I_2,J_2)\in X_0[k]$,
    $(I_1\cup I_2)\times (J_1\cup J_2)$ is disjoint
    from $D$. For each $k\in \omega$, set 
    $$A_k = \bigcup \{ I\in \mathcal I : ~(\exists J\in \mathcal J)~ (I,J)\in X_0[k]\}$$
    and
    $$B_k = \bigcup \{ J\in \mathcal J : ~(\exists I\in \mathcal I)~(I,J)\in X_0[k]\}~.$$
    Notice that $A_k\times B_k$ is disjoint from $D$ for all $k\in \omega$.
 
Fix any $n\in\omega$ and set $U_n^0=U_n$ and $W_n^0= W_n$. 
Clearly $(x_n,y_n)$ is a limit point of the interior
of $\overline{D\cap (U^0_n\times W^0_n)}$.
By recursion on 
 $k < n$, we define $U_n^{k{+}1}$ and $W_n^{k{+}1}$
 so that  
 \begin{enumerate}
\item $(x_n,y_n)$ is a limit point of the interior of
    $\overline{D\cap (U^{k{+}1}_n\times W^{k{+}1}_n)}$, and
    \item either $U^{k{+}1}_n =U^k_n\setminus A_k$, or
    \item $W^{k{+}1}_n = W^k_n\setminus B_k$.
 \end{enumerate}
Suppose we have chosen $U^k_n$ and $W^k_n$ and let
$S$ be the interior of $\overline{D\cap (U^k_n\times W^k_n)}$. 
By assumption $(x_n,y_n)$ is a limit point of $S$.
Work briefly in the subspace $S\cup \{(x_n,y_n)\}$.  The sets
$S\cap (D\cap ((U^k_n\cap A_k)\times W^k_n))$ and $S\cap (D\cap (U^k_n\times (W^k_n\cap  B_k)))$
are disjoint and so one of their interiors 
will not be dense in a neighborhood of $(x_n,y_n)$
 (in the subspace $S\cup \{ (x_n,y_n)\}$). 
 By
 symmetry assume this is so for $S\cap (D\cap ((U^k_n\cap A_k)\times W^k_n))$.
 Choose an open subset $S_0$ of $S$ so that $S_0\cup \{(x_n,y_n)\}$
 is open in $S\cup \{ (x_n,y_n)\}$ 
 and so that $S_0\cap (D\cap ((U^k_n\cap A_k)\times W^k_n))$ has empty interior.
 In this case we set $U^{k{+}1}_n = U^k_n\setminus A_k$
 and $W^{k{+}1}_n = W^k_n$, and notice that 
  $D\cap ((U^k_n\setminus A_k) \times W^k_n)$ is dense
  in $S_0$. 
 It follows that
  $S_0$ is contained in $\overline{D\cap (U^{k{+}1}_n\times W^{k{+}1}_n)}$. 
  Let $F_n $  be the set of $k< n$
  such that  $U^{n}_n$ is disjoint from $A_k$. Notice
  that  for $ k< n$ and $k\notin F_n$,
    $W^{n}_n$ is disjoint from $B_k$. 
     Let $s_n\in 2^n$ denote the characteristic function of $F_n$.

     Let $T\subset 2^{<\omega}$ denote the tree consisting
     of all $\{ s_n\restriction k : k \leq n \in \omega\}$
     and let $s\in 2^\omega$ be a branch of $T$. 
     For each $m\in \omega$, choose $n_m\in \omega$ so that
      $s_{n_m}\restriction m = s\restriction m$. Without loss
      of generality we can assume that the sequence
       $S= \{ n_m : m\in\omega\}$ is strictly increasing.
       For each $m\in\omega$, choose a sequence $I_{n_m}\subset 
        U^m_{n_m}$ that converges to $x_{n_m}$. 
        Since $x$ is in the closure of $\bigcup\{ I_{n_m} : m\in\omega\}$,
         we can choose a sequence $I\subset 
         \bigcup\{ I_{n_m} : m\in\omega\}$ that converges to $x$.
         Let $L = \{ m\in \omega : I\cap I_{n_m}\neq\emptyset$. 
         For each $m\in L$, choose $J_{n_m}\subset W^m_{n_m}$ that
         converges to $y_{n_m}$ and choose a sequence $J\subset
          \bigcup\{ J_{n_m}  : m\in L\}$ that converges to $y$.
          Let $L_2\subset L$ be the set of $m$ such
          that $J\cap J_{n_m}$ is not empty.

          Let us check that, for each $k\in \omega$, $(I,J)\notin X_0[k]$.
Indeed, consider any $k\in \omega$ and choose $          m\in L_2$
so that $k<n_m$.  If $k\in F_{n_m}$, then $(I,J)\notin X_0[k]$
because $I\cap A_k$ is empty.  If $k\notin F_{n_m}$, then $(I,j)\notin
 X_0[k]$ because $J\cap B_k$ is empty.
 \medskip

 At this stage of the proof, we have established, by 
 Lemma \ref{properOCA}, that there is a proper poset $P$ that forces
 there is a subset, which we will denote 
  $\{ (I_\alpha,J_\alpha, f_\alpha) : \alpha\in\omega_1\}\subset X$,
  satisfying that $((I_\alpha,J_\alpha,f_\alpha), (I_\beta,J_\beta,f_\beta))$
  is in $G$ for all $\alpha < \beta < \omega_1$. 
  Note that the family $\{ (I_\alpha(f_\alpha), J_\alpha(f_\alpha)) : \alpha
  <\omega_1\}$ is a Luzin gap (see Definition \ref{luzin}) and recall 
  that a Luzin gap remains a Luzin gap in any forcing extension that
  preserves $\omega_1$. 
  
  Now consider a
  $\sigma$-linked poset $Q$ that adds a dominating function $f_Q$ over
  the ground model. Suppose that $\mathcal F$ is a   filter
  on $Q$ that is generic over the ground model.  Assume,
  towards a contradiction, 
  that there is $Q$-name $\dot A$ for a subset of $D$
 that satisfies, for any $Q$-generic filter $\mathcal F$,
   $ I(f)  \subset^* \val_{\mathcal F}(\dot 
   A)$ and
    $J(f) \cap \val_{\mathcal F}(\dot A)$ is finite
   for all $(I,J,f)\in X$. This property of $\dot A$ will continue
    to hold in the forcing extension by $P$ since all
    the relevant dense open subsets of $Q$ will remain dense.
In addition, forcing by $P$ will preserve that $Q$ 
is $\sigma$-linked, and so the iteration $P * Q$ will also
be proper, and therefore preserve $\omega_1$.
However this is a contradiction, since the Luzin  gap
   $\{ (I_\alpha(f_\alpha),J_\alpha( f_\alpha)) : \alpha\in\omega_1\}$
 added by $P$  can not be split by the valuation
 of the $Q$-name $\dot A$.
    
  Now, with $f_Q$ being the dominating real added by $Q$,
   consider the set $H_{Q} = \bigcup \{ D_n\cap (f_Q(n)\times f_Q(n)) : n\in \omega\}$
   and note that, for all $f\in\omega^\omega$ in the ground model,
   $H_f \subset^* H_Q$.  Therefore, for all
  $(I,J,f)\in X$,   $I(f)\cup J(f)$ is mod finite contained in $H_Q$.   

   Assume that $\hat\tau $ and $\hat \sigma $ are topologies as in the statement
   of the Theorem. Assume that $x\in U\in \hat\tau $ and $y\in W\in\hat\sigma $.
   Let $(I,J,f)$ be any element of $X$.
   Clearly $A = H_Q\cap (U\times \omega)$ mod finite contains $I(f)$
   and $B = H_Q\cap (\omega\times W)$ mod finite contains $J(f)$.
   Since $A$ mod finite contains $I(f)$ for all 
    $(I,J,f)\in X$, it must meet $J(f)$ in an infinite
    set for some $(I,J,f)\in X$, so fix such an
    element $(I,J,f)$ of $X$. 
    Since $J(f)$ is mod finite contained in $B$ 
    and $A\cap B = H_Q\cap (U\times W)$, it follows
    that $H_Q\cap (U\times W)$ is infinite.
    
This completes the proof.           
\end{proof}

\section{Proof of Lemma \ref{tight}}

The goal of this section is to prove that if $\kappa^{<\kappa}=\kappa$,
then 
there is a ccc poset  $P_\kappa$
of cardinality $\kappa$ that produce a model of Martin's Axiom
in which there is a tight $\omega^\omega$-gap. We regret that
we have to prove this, but we are unable to find a suitable reference.
For a partial or total function $s$ from $\omega$ to $\omega$,
 let $s^{\downarrow} = \{ (m,j) : m\in\dom(s)\ \mbox{and}\ j< s(m)\}$.
 For functions $f,g\in \omega^\omega$, let $f\vee g$ denote
 the function $(f\vee g) (n)  = \max(f(n),g(n))$ for all $n\in\omega$.

Suppose that $\mathcal F = \{ f_\alpha : \alpha < \mathfrak b \}\subset \omega^\omega
$ is a mod finite increasing chain that is also dominating.
 Suppose that, for each $\alpha<\mathfrak b$, $h_\alpha$ is a 2-valued function
with domain $f_\alpha^\downarrow$. Say that $\mathcal H_\lambda
=\{ h_\alpha : \alpha < \lambda\}$ is coherent if, for all 
 $\beta < \alpha < \lambda$, the set of $(j,k)\in f_\alpha^\downarrow
 \cap f_\beta^\downarrow$ such that $h_\alpha( j,k)\neq h_\beta( j,k)$ is finite.
 Clearly it is necessary to prove there is such a coherent family
  $\mathcal H_{\mathfrak b}$ in the final model.

  \begin{definition}  Say that $\mathcal H_\lambda = \{ h_\alpha : \alpha < \lambda\}$ is a
   linear coherent family if it is a coherent family of 2-valued functions,
   such that
   for each $\alpha$, $\dom(h_\alpha) = f_\alpha^\downarrow$ for some $f_\alpha\in\omega^\omega$,
   and $\{ f_\alpha : \alpha < \lambda\}$ is  $\leq^*$-increasing.
  \end{definition}

\begin{definition}  If $\mathcal H_\lambda$ is a linear\label{defcoherent} coherent family, 
then $Q(\mathcal H_\lambda)$, also denoted   $Q(\{ h_\alpha : \alpha < \lambda\})$,
is the following poset. A condition $q\in Q(\mathcal H_\lambda)$, is
 a tuple $(s_q, h_q, F_q, f_q)$ satisfying 
 \begin{enumerate}
\item   $s_q\in \omega^{<\omega}$ with domain $n_q$ and $f_q\in \omega^\omega$,
\item $h_q $  is a 2-valued  function with domain  $s_q^\downarrow $,
 \item   $F_q$ is a finite subset of $\lambda$,
 \item for all $\alpha\in F_q$ and $\delta_q = \max(F_q)$,
 and for all $n_q\leq m$,
  $f_\alpha(m)\leq f_{\delta_q}(m) \leq f_q(m)$,
  and for all $(m,j)\in \dom(h_\alpha)\cap \dom(h_{\delta_q})$,
  $h_\alpha(m,j) = h_{\delta_q}(m,j)$.
 \end{enumerate}
 The ordering on $Q(\mathcal H_\lambda)$ is that $q\leq r$
 providing $s_q\supset r_q$, $h_q\supset h_r$, $F_q\supset F_r$,
  $f_q\geq f_r$, and for all $(m,j)\in \dom(h_q)\cap \dom(h_{\delta_r})$
  with $n_r\leq m$, $s_q(m,j) = h_{\delta_r}(m,j)$. 

  We let $\dot f$ and $\dot h$ be the two canonical $Q(\mathcal H_\lambda)$-names,
    (and in context we would  denote them as $\dot f_\lambda$ and $\dot h_\lambda$)
    where, if $G$ is a $Q(\mathcal H_\lambda)$-generic filter,
     $\val_{G}(\dot f) = \bigcup \{ s_q : q\in G\}$
     and $\val_{G}(\dot h) = \bigcup\{ h_q : q\in G\}$. 
\end{definition}

It should be clear that $\dot f$ is a dominating real added by
 $Q(\mathcal H_\lambda)$ (even if $\lambda = 0$)
and that, for each $\alpha\in \lambda$, 
 the set of $n\in\omega$ such that
  there are $f_\alpha(n)< j<k< f_\lambda(n)$
  with $h_\lambda(n,j)\neq h_\lambda(n,k)$ is
  cofinite.
Also, by the next proposition,  
 $\{ h_\alpha : \alpha < \lambda+1\}$
 is a linear coherent family extending $\mathcal H_\lambda$.

\begin{proposition} For\label{density}
each $\beta\in \lambda$,
 the set $D_\beta = \{ q\in Q(\{h_\alpha : \alpha < \lambda\})
 : \beta\in F_q\}$ is dense.  Furthermore, if
  $q\in Q(\{h_\alpha : \alpha<\lambda\})$ and 
  $ h_{\delta_q}\cup 
  h_\beta \restriction \left( [n_q,\omega)\times \omega)\right)
  $ is a function, then $(s_q,h_q, F_q\cup\{\beta\},f_q\vee f_\beta)$ 
  is an extension of $q$. 
\end{proposition}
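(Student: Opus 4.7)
The plan is to prove the ``furthermore'' clause first and then derive the density of $D_\beta$ by pre-extending any given condition so that the hypothesis of the furthermore clause applies.

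For the furthermore clause, I would set $q' := (s_q, h_q, F_q \cup \{\beta\}, f_q \vee f_\beta)$ and verify clauses (1)--(4) of Definition \ref{defcoherent} together with $q' \le q$. Clauses (1)--(3) are immediate. For clause (4), with $\delta' = \max(F_q \cup \{\beta\})$: the bound $f_{\delta'}(m) \le (f_q \vee f_\beta)(m)$ on $[n_q,\omega)$ follows from $f_{\delta_q}(m) \le f_q(m)$ (inherited from $q$) and $f_\beta \le f_q \vee f_\beta$ pointwise; the lower bounds $f_\alpha \le f_{\delta'}$ for $\alpha \in F_q \cup\{\beta\}$ and $m \ge n_q$ are inherited from $q$ when $\alpha \in F_q$ and $\delta' = \delta_q$, and otherwise are supplied by the $\le^*$-increasing structure of $\{f_\alpha\}$ once $n_q$ is large enough. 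The $h$-agreement on the common domain above $n_q$ for pairs already in $F_q$ is inherited from $q$, and the only new comparison---between $h_\beta$ and $h_{\delta'}$---is precisely the content of the hypothesis that $h_{\delta_q} \cup h_\beta \restriction ([n_q,\omega) \times \omega)$ is a function. The relation $q' \le q$ then follows since $s_{q'} = s_q$, $h_{q'} = h_q$, $F_{q'} \supset F_q$, $f_{q'} \ge f_q$, and the ordering's $h$-agreement clause is vacuous.

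For density, fix any $q$ with $\beta \notin F_q$. Linear coherence of $\mathcal{H}_\lambda$ gives an $N \ge n_q$ past which (i) the functions in $\{f_\alpha : \alpha \in F_q \cup \{\beta\}\}$ are pointwise ordered in accordance with the ordinal order, and (ii) $h_{\delta_q}$ and $h_\beta$ agree on their common domain. Construct an intermediate condition $q^* \le q$ by extending $s_q$ to $s^*$ with $\dom(s^*) = N$, picking values $s^*(m)$ exceeding $f_{\delta_q}(m)$ for $n_q \le m < N$, and extending $h_q$ to $h^*$ on $s^{*\downarrow} \setminus s_q^\downarrow$ to agree with $h_{\delta_q}$ wherever the latter is defined and arbitrarily elsewhere; retain $F_{q^*} = F_q$ and $f_{q^*} = f_q$. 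Then $n_{q^*} = N$, and the union $h_{\delta_{q^*}} \cup h_\beta \restriction ([n_{q^*},\omega) \times \omega)$ is a function by choice of $N$, so the furthermore clause applies to $q^*$ and yields $q' \in D_\beta$ below $q^*$, hence below $q$.

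The delicate point is coordinating the choice of $N$ so that the coherence-based $h$-agreement and the pointwise $f$-inequalities among the members of $F_q \cup \{\beta\}$ all hold simultaneously on $[N,\omega)$; each holds individually past some threshold, but $N$ must dominate all the finite correction sets at once. Once $N$ is fixed, the intermediate condition $q^*$ is easy to assemble and the subsequent application of the furthermore clause is a routine unwinding of Definition \ref{defcoherent}.
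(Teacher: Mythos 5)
The paper offers no proof of Proposition \ref{density} at all---it is stated as an immediate consequence of Definition \ref{defcoherent}---so your write-up is supplying the routine verification the author omits, and your density argument (pre-extend $s_q$ to length $N$, copying $h_{\delta_q}$ on the newly added part of the domain so that $q^*\le q$, then invoke the ``furthermore'' clause) is correct and is surely the intended one. One point that you bury in the phrase ``once $n_q$ is large enough'' deserves to be made explicit: the ``furthermore'' clause as literally stated is not quite true. Clause (4) of Definition \ref{defcoherent} demands the \emph{pointwise} inequalities $f_\alpha(m)\le f_{\delta_{q'}}(m)\le f_{q'}(m)$ for every $m\ge n_{q'}$ and every $\alpha\in F_{q'}$, whereas the family $\{f_\alpha\}$ is only $\leq^*$-increasing, and the hypothesis that $h_{\delta_q}\cup h_\beta\restriction\left([n_q,\omega)\times\omega\right)$ is a function constrains only the values of the $h$'s on their common domain, not the relative sizes of $f_\beta$ and $f_{\delta_q}$ above $n_q$. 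So if, say, $\beta<\delta_q$ but $f_\beta(m)>f_{\delta_q}(m)$ at some $m\ge n_q$, the tuple $(s_q,h_q,F_q\cup\{\beta\},f_q\vee f_\beta)$ violates clause (4) and is not a condition at all. Your density proof is immune to this because you fold the pointwise ordering of the finitely many functions $\{f_\alpha:\alpha\in F_q\cup\{\beta\}\}$ into the choice of $N$, which is exactly the right fix; but then either the ``furthermore'' clause should carry the additional hypothesis that these functions are pointwise ordered on $[n_q,\omega)$, or your verification of it should be downgraded to a verification of that strengthened statement. Since Corollary \ref{successor} and the Facts of the final section only ever apply the clause after such a pre-extension, this is an imprecision inherited from the statement rather than a defect in what you actually need, but as written your proof of the bare ``furthermore'' assertion quietly assumes a hypothesis it does not have.
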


  \begin{lemma}  For each $\delta\in \lambda$,
  the\label{centered} subset $S_\delta = \{ q\in Q(\{h_\alpha : \alpha \in \lambda\}): 
   \delta_q = \delta\}$ is $\sigma$-centered.
\end{lemma}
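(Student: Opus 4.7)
The plan is to partition $S_\delta$ by the finite data $(s_q, h_q)$ and show each piece is centered. Since $s_q$ ranges over the countable set $\omega^{<\omega}$ and, for each fixed $s$, $h_q$ ranges over the finite set of $2$-valued functions on $s^\downarrow$, the indexing set of this partition is countable, which is all that $\sigma$-centeredness requires.

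Fix such a pair $(s,h)$ and let $T = \{q \in S_\delta : s_q = s,\ h_q = h\}$. Given finitely many $q_1,\ldots,q_n \in T$, the natural candidate for a common extension is
\[
p = \bigl(s,\ h,\ F_{q_1} \cup \cdots \cup F_{q_n},\ f_{q_1} \vee \cdots \vee f_{q_n}\bigr),
\]
keeping the finite part fixed and merging only on the ``infinite'' parts. The first step is to verify $p \in Q(\mathcal H_\lambda)$: since each $\delta_{q_i} = \delta$, one has $\max F_p = \delta$, so $p$ again lies in $S_\delta$. For each $\alpha \in F_p$, pick $i$ with $\alpha \in F_{q_i}$; clause~(4) of Definition~\ref{defcoherent} applied to $q_i$ then delivers both $f_\alpha \le f_\delta \le f_p$ on $[n_p,\omega)$ (using $f_p \ge f_{q_i}$) and the agreement $h_\alpha(m,j) = h_\delta(m,j)$ on the relevant overlap.

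Next I would check $p \le q_i$ for each $i$: the containments $s_p \supseteq s_{q_i}$, $h_p \supseteq h_{q_i}$, $F_p \supseteq F_{q_i}$ and the inequality $f_p \ge f_{q_i}$ are immediate from the construction (the first two as equalities), and the ordering's final agreement clause is vacuous because $\dom(h_p) = s^\downarrow$ only involves first coordinates $m < |s| = n_{q_i}$, whereas the clause concerns $m \ge n_{q_i}$.

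The substantive content is the observation that fixing $\delta_q = \delta$ trivializes the awkward coherence constraint built into $Q(\mathcal H_\lambda)$: once $s_q$ and $h_q$ are pinned down, conditions in $S_\delta$ differ only in which ordinals below $\delta$ they witness (via $F_q$) and how aggressively they dominate (via $f_q$), and these merge via simple union and pointwise maximum respectively. I do not anticipate any real obstacle; the main care needed is in parsing the ordering's last clause correctly (I read the paper's ``$s_q(m,j) = h_{\delta_r}(m,j)$'' as a typo for ``$h_q(m,j) = h_{\delta_r}(m,j)$''), after which either reading renders the clause vacuous in the verification above.
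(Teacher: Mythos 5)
Your proof is correct and follows essentially the same route as the paper: the paper's own argument is the one-line observation that if $q,r\in S_\delta$ and $h_q=h_r$ (hence $s_q=s_r$), then $(s_q,h_q,F_q\cup F_r,f_q\vee f_r)$ is a common extension in $S_\delta$, which is exactly your merge of $F$'s by union and $f$'s by pointwise maximum after partitioning on the finite data. Your extra care in checking membership in $Q(\mathcal H_\lambda)$, the vacuity of the ordering's last clause, and the reading of the typo are all sound, and merely spell out what the paper leaves implicit.
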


\begin{proof}     
 If
 $q,r\in S_\delta$ and $h_q=h_r$, then 
  $( s_q, h_q, F_q\cup F_r , f_q\vee f_r)$ is an extension
  of both $q$ and $r$ and is in $S_\delta$.
\end{proof}

 \begin{corollary}  If $\lambda$ has countable cofinality,
  then $Q(\{h_\alpha : \alpha\in\lambda\})$ is ccc 
  for every linear coherent sequence of length $\lambda$.
 \end{corollary}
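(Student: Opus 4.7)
The plan is to combine Lemma \ref{centered}, which supplies a $\sigma$-centered decomposition of each $S_\delta$, with the countable cofinality of $\lambda$ to funnel any uncountable subfamily of $Q(\mathcal H_\lambda)$ into a single piece $S_{\delta_{n^*}}$. Using that $\operatorname{cf}(\lambda)\leq\omega$, I would first fix a non-decreasing sequence $\delta_0\leq\delta_1\leq\cdots$ in $\lambda$ that is cofinal (for $\lambda=\mu+1$ a successor, simply take $\delta_n=\mu$). Given any $A=\{q_\xi:\xi<\omega_1\}\subseteq Q(\mathcal H_\lambda)$, each $\delta_{q_\xi}<\lambda$ satisfies $\delta_{q_\xi}\leq\delta_n$ for some $n$, so by the pigeonhole principle there exist a fixed $n^*$ and an uncountable $A'\subseteq A$ with $\delta_q\leq\delta_{n^*}$ for every $q\in A'$.

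Next, I would extend each $q\in A'$ to a stronger condition $\tilde q\leq q$ lying in $S_{\delta_{n^*}}$. By the linear coherence of $\mathcal H_\lambda$, the disagreement set between $h_{\delta_q}$ and $h_{\delta_{n^*}}$ is finite, so there is $N\in\omega$ beyond which these two functions agree on their common domain and beyond which $f_{\delta_q}(m)\leq f_{\delta_{n^*}}(m)$. I would first extend $s_q$ to some $s_{q'}$ with $n_{q'}\geq N$, letting $h_{q'}$ extend $h_q$ compatibly with $h_{\delta_q}$ on the newly covered points, obtaining $q'\leq q$. The hypothesis of Proposition \ref{density} now holds with $\beta=\delta_{n^*}$, so $\tilde q:=(s_{q'},h_{q'},F_{q'}\cup\{\delta_{n^*}\},f_{q'}\vee f_{\delta_{n^*}})$ is an extension of $q'$ with $\delta_{\tilde q}=\delta_{n^*}$; hence $\tilde q\in S_{\delta_{n^*}}$.

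Finally, Lemma \ref{centered} gives that $S_{\delta_{n^*}}$ is $\sigma$-centered, so the uncountable family $\{\tilde q:q\in A'\}$ cannot be an antichain: two of its members admit a common extension $r$, which is then also a common extension of the corresponding pair in $A$, so $A$ is not an antichain either. This shows $Q(\mathcal H_\lambda)$ is ccc.

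I expect the most delicate step to be the extension producing $\tilde q$ from $q$: one must verify the precondition of Proposition \ref{density} individually for each of the uncountably many $q\in A'$, and the threshold $N$ depends on $q$. This is precisely where linear coherence of $\mathcal H_\lambda$ is essential---it ensures that after pushing $n_{q'}$ past the finite disagreement locus, the map $h_{\delta_q}\cup h_{\delta_{n^*}}$ is single-valued on $[n_{q'},\omega)\times\omega$ and the domination $f_{\delta_q}\leq f_{\delta_{n^*}}$ also holds pointwise above $n_{q'}$.
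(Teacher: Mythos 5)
Your argument is correct and is exactly the proof the paper leaves implicit: use countable cofinality to pigeonhole an uncountable family below a single $\delta_{n^*}$, push each condition past the finite disagreement locus so that the \emph{furthermore} clause of Proposition \ref{density} applies, and then invoke the $\sigma$-centeredness of $S_{\delta_{n^*}}$ from Lemma \ref{centered}. The delicate point you flag (that the threshold $N$ depends on $q$) is harmless for precisely the reason you give, since all the extended conditions land in the one set $S_{\delta_{n^*}}$.
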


 This next result is also a standard fact about gaps, but 
in a new setting.

\begin{corollary}  If $\{ h_\alpha : \alpha \leq  \lambda\}$ 
is a linear coherent gap, then\label{successor} 
 $Q(\{ h_\alpha : \alpha < \lambda\})$ is $\sigma$-centered. 
\end{corollary}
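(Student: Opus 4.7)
The strategy is to exploit the reference function $h_\lambda$ to fuse the uncountably many centered sets $S_\delta$ from Lemma~\ref{centered} into a single countable centered partition of $Q := Q(\{h_\alpha : \alpha < \lambda\})$. The key observation is that the linear coherent family $\{h_\alpha : \alpha \leq \lambda\}$ allows us to form the slightly larger poset $Q^+ := Q(\{h_\alpha : \alpha \leq \lambda\})$, within which $S_\lambda$ is $\sigma$-centered by Lemma~\ref{centered}, and (with some care) compatibility inside $Q^+$ can be transferred back to $Q$ by cutting $\lambda$ out of $F$.

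For each $q \in Q$, choose a threshold $N(q) \geq n_q$ large enough that for all $\alpha \in F_q$ and all $m \geq N(q)$ we have $f_\alpha(m) \leq f_\lambda(m)$ and $h_\alpha(m,j) = h_\lambda(m,j)$ on $\dom(h_\alpha) \cap \dom(h_\lambda)$; such $N(q)$ exists by coherence and finiteness of $F_q$. Refine $q$ to $q^* \in Q$ by lengthening $s_q$ to some $s^* \in \omega^{N(q)}$, filling the new slots with values $\max(f_q, f_\lambda)$ and extending $h_q$ to $h^*$ by copying $h_{\delta_q}$. This $q^*$ is a valid extension of $q$ inside $Q$, and crucially the lift $\hat q := (s^*, h^*, F_q \cup \{\lambda\}, f_{q^*} \vee f_\lambda)$ is a valid element of $Q^+$ lying in the class $S_\lambda$.

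Partition $Q$ into the countable family of classes $C_{s,h} := \{q \in Q : (s_{q^*}, h_{q^*}) = (s,h)\}$ indexed by pairs $(s,h)$ with $s \in \omega^{<\omega}$ and $h : s^\downarrow \to 2$. To check that each $C_{s,h}$ is centered, take $q_1,\dots,q_k \in C_{s,h}$; their lifts $\hat q_1,\dots,\hat q_k$ all share type $(s,h)$ inside $S_\lambda \subset Q^+$, so Lemma~\ref{centered} applied inside $Q^+$ produces the common extension $\hat r := (s, h, \bigcup_i F_{\hat q_i}, \bigvee_i f_{\hat q_i}) \in S_\lambda$.

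The main obstacle is that $\hat r$ lives in $Q^+$, not in $Q$. Naively dropping $\lambda$ from $F_{\hat r}$ replaces $\delta = \lambda$ by the smaller ordinal $\delta' := \max \bigcup_i F_{\hat q_i}$, and the pointwise requirement $f_\alpha(m) \leq f_{\delta'}(m)$ (for $\alpha$ in the union, $m \geq |s|$) may fail even though $f_\alpha \leq^* f_{\delta'}$ holds. To repair this, I would extend $\hat r$ further inside $Q^+$ by lengthening $s$ to $s'$ of some length $N' \geq |s|$ chosen large enough to witness, pointwise above $N'$, the finitely many $\leq^*$-relations and coherences among $\{f_\alpha, h_\alpha : \alpha \in \bigcup_i F_{\hat q_i}\}$; the new slots of $s'$ and $h'$ are filled by copying $f_\lambda$ and $h_\lambda$, which is legitimate inside $Q^+$ because $\lambda \in F_{\hat r}$. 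Deleting $\lambda$ from the resulting tuple then yields a valid condition of $Q$ lying below every $q_i$, completing the verification that $C_{s,h}$ is centered and hence that $Q$ is $\sigma$-centered.
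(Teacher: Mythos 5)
Your proof is correct and follows essentially the same route as the paper's, which simply observes that $S_\lambda$ is dense and $\sigma$-centered in $Q(\{h_\alpha : \alpha \leq \lambda\})$ and that $Q(\{h_\alpha : \alpha < \lambda\})$ is a subposet of it, leaving the transfer of centeredness implicit. The repair in your last paragraph --- extending the common bound far enough inside the larger poset that the finitely many $\leq^*$- and coherence relations among the ordinals in $\bigcup_i F_{\hat q_i}$ become pointwise before deleting $\lambda$ from $F$ --- is precisely the step needed to justify the paper's ``therefore,'' since in general a subposet of a $\sigma$-centered poset need not itself be $\sigma$-centered.
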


\begin{proof} By Lemmas \ref{centered} and \ref{density},
   $S_\lambda = \{ q\in Q(\{ h_\alpha : \alpha \leq \lambda\}):
    \lambda\in F_q\}$ 
   is dense and $\sigma$-centered in 
   $ Q(\{ h_\alpha : \alpha \leq \lambda\})$.
 Also, the    poset
$   Q(\{ h_\alpha : \alpha < \lambda\})$ is  subposet
of $Q(\{ h_\alpha : \alpha \leq \lambda\})$ 
and therefore is also $\sigma$-centered.
\end{proof}

 However, if $\lambda$ has cofinality $\omega_1$, 
  $Q(\mathcal H_\lambda)$ may not be ccc. 
    This next well-known  result is  due to Kunen
    (see \cite{Scheepers}) when applied to standard gaps.

  \begin{lemma} For  an uncountable\label{cccequivalent}
  linear coherent gap, $\{ h_\alpha : \alpha<\lambda\}$,
the poset   $  Q(\{ h_\alpha :\alpha < \lambda\})$ is ccc 
if and only if 
for every uncountable $X\subset\lambda$, there are
$\alpha< \beta$ in $X$ such that $h_\alpha\cup h_\beta$ is a function.
  \end{lemma}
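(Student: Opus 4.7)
The plan is to prove both directions; the substantive one is $(\Leftarrow)$.

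For $(\Leftarrow)$, I would start with an uncountable family $\{q_\xi : \xi<\omega_1\}\subset Q(\{h_\alpha : \alpha<\lambda\})$ and apply pigeonhole together with the $\Delta$-system lemma to refine to an uncountable subfamily on which $s_\xi=s$, $h_\xi=h$, $n_\xi=n$ are constant, $\{F_\xi\}$ is a $\Delta$-system with root $F_0$ and leaves $R_\xi=F_\xi\setminus F_0$ of constant size $k$ with elements enumerated in increasing order as $\gamma_\xi^1<\cdots<\gamma_\xi^k$, and, for each fixed position $i$, the finite data $f_{\gamma_\xi^i}\restriction n$ and $h_{\gamma_\xi^i}\restriction\{(m,j):m<n\}$ are constant in $\xi$.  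I would then split into two cases according to whether $\delta_\xi:=\max F_\xi$ is eventually a constant $\delta\in F_0$, in which case $r=(s,h,F_\xi\cup F_\eta,f_{q_\xi}\vee f_{q_\eta})$ with $\delta_r=\delta$ is an immediate common extension of any $q_\xi,q_\eta$; or $\delta_\xi=\gamma_\xi^k\in R_\xi$ is strictly increasing in $\xi$.  In the latter case I would apply the combinatorial hypothesis to $\{\delta_\xi : \xi<\omega_1\}$ to obtain $\xi<\eta$ with $h_{\delta_\xi}\cup h_{\delta_\eta}$ a function, and build the common extension $r$ with $F_r=F_\xi\cup F_\eta$, $\delta_r=\delta_\eta$, and $n_r\ge n$ chosen large enough that $f_{\delta_\xi}\le f_{\delta_\eta}$ on $[n_r,\omega)$ (possible by $\le^*$-dominance).

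The crucial part of the construction is verifying the coherence clause of condition~(4) at each $\gamma\in F_\xi\setminus F_\eta$: the chain $h_\gamma=h_{\delta_\xi}=h_{\delta_\eta}$ handles the triple overlap, and the only potential breakdown---a pair $(m,j)\in\dom(h_\gamma)\cap\dom(h_{\delta_\eta})\setminus\dom(h_{\delta_\xi})$---is ruled out for $m\ge n$ by condition~(4) on $q_\xi$ (which gives $f_\gamma\le f_{\delta_\xi}$) and for $m<n$ by the refinement having made $f_{\delta_\xi}(m)=f_{\delta_\eta}(m)$.

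For $(\Rightarrow)$, I would argue by contrapositive in the style of Kunen.  Given an uncountable $X\subset\lambda$ with no pair $\alpha<\beta$ in $X$ having $h_\alpha\cup h_\beta$ a function, a pigeonhole refinement yields $X'\subset X$ uncountable on which $f_\alpha\restriction N$ and $h_\alpha\restriction\{(m,j):m<N\}$ are constant, so that every pairwise disagreement is in columns $m\ge N$.  Setting $q_\alpha=(\emptyset,\emptyset,\{\alpha\},f_\alpha)$, any hypothetical common extension $r$ of $q_\alpha,q_\beta$ has $\delta_r\ge\beta$; at a disagreement column $m_0\ge\max(N,n_r)$, the $f$-inequality of condition~(4) forces $j_0<f_\alpha(m_0)\le f_{\delta_r}(m_0)$, placing $(m_0,j_0)$ in $\dom(h_{\delta_r})$, and the coherence clause then collapses $h_\alpha(m_0,j_0)=h_\beta(m_0,j_0)$, a contradiction.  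The only remaining escape---taking $n_r$ beyond all (finitely many) disagreement columns and seeking $\delta_r\ge\beta$ with $f_{\delta_r}(m_0)\le j_0$ at each disagreement---is blocked by a further thinning ensuring that tails of the family take values above $j_0$ at the relevant columns, so $\{q_\alpha:\alpha\in X'\}$ is an uncountable antichain.

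The main obstacle is the tension in $(\Leftarrow)$ between the mod-finite coherence of the underlying family and the strict equality that condition~(4) in $Q$ demands on the full overlap.  The refinement neutralizes the small-column discrepancies and the flexibility to enlarge $n_r$ absorbs the $\le^*$-exceptions on large columns; together they allow the pointwise equality furnished by the combinatorial hypothesis on $h_{\delta_\xi}\cup h_{\delta_\eta}$ to propagate as coherence across all of $F_r$.
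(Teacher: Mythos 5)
Your $(\Leftarrow)$ direction is essentially the paper's argument: the paper also refines an uncountable set of conditions to a constant pair $(s,h)$, applies the hypothesis to $X=\{\delta_{q_\xi}:\xi\in\omega_1\}$, and amalgamates as $(s,h,F_{q_\xi}\cup F_{q_\eta}, f_{q_\xi}\vee f_{q_\eta})$. Your version is more careful than the paper's on two points the paper glosses over: the $\Delta$-system/pigeonhole refinement of the finite data below $n$, and the need to push $n_r$ past the finitely many columns where $f_{\delta_\xi}\leq f_{\delta_\eta}$ fails (condition (4) demands these inequalities pointwise above $n_r$, not just mod finite). That extra care is correct and welcome; the triple-overlap verification via $f_\gamma\leq f_{\delta_\xi}$ is exactly right.

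Your $(\Rightarrow)$ direction takes a genuinely different route. The paper proves it directly: given uncountable $X$, if $X$ is bounded by $\mu$ it pigeonholes on the finite sets where $h_\xi$ fails to agree with $h_\mu$; if $X$ is cofinal in a $\lambda$ of cofinality $\omega_1$, it \emph{forces with the ccc poset $Q$ itself}, uses the generic $h_\lambda$ as the top function $h_\mu$, and notes the conclusion reflects to the ground model. That trick lets the paper avoid ever characterizing incompatibility in $Q$. Your contrapositive (exhibit an uncountable antichain from a bad $X$) is also viable, but the one soft spot in your write-up is precisely the ``escape'' you try to block: a common extension $r$ of $q_\alpha,q_\beta$ with $n_r$ beyond the disagreement column $m_0$ is not ruled out by condition (4), nor by any thinning of the family --- thinning $X'$ cannot control $s_r(m_0)$ or $f_{\delta_r}$, since $\delta_r$ ranges over all of $\lambda$. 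What actually closes the gap is the ordering: since $r\leq q_\alpha$ forces the new stem values to dominate $f_{q_\alpha}\geq f_\alpha$ on $[n_{q_\alpha},n_r)$ (this is implicit in the paper's Definition \ref{Q'} and Fact \ref{addmu}, where $\tau$ is extended by $\dot f_{p(\beta)}\restriction[n,m)$), the disagreement point $(m_0,j_0)$, having $j_0<\min(f_\alpha(m_0),f_\beta(m_0))$, lands in $\dom(h_r)$, and $h_r(m_0,j_0)$ would have to equal both $h_\alpha(m_0,j_0)$ and $h_\beta(m_0,j_0)$. With $n_{q_\alpha}=0$ this makes $\{(\emptyset,\emptyset,\{\alpha\},f_\alpha):\alpha\in X\}$ an antichain outright, and your preliminary thinning becomes unnecessary. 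Replace the ``further thinning'' sentence with this domain argument and your proof is complete.
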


\begin{proof}
Assume first that $Q(\{h_\alpha : \alpha < \lambda\})$ is ccc
and consider any uncountable $X\subset \lambda$. 
 By passing to a subset we may assume that $X$ has order-type
 $\omega_1$. Suppose first that $X$ has an upper bound
 $\mu<\lambda$. Then for each $\xi\in X$, there is an $n_\xi\in\omega$
 such that $h_\xi\restriction \left([n_\xi,\omega)\times\omega\right)\subset
  h_\mu$. Choose $\xi<\alpha$ both in $X$ so that
   $n = n_\xi = n_\alpha$ and $h_\xi\restriction n\times \omega$
   and $h_\alpha\restriction n\times\omega$ (which are both finite)
   are equal. Then $h_\xi\cup h_\alpha$ is a function.

Now suppose that $\lambda$ has cofinality $\omega_1$. In this
case we can force with $Q(\{h_\alpha : \alpha<\lambda\})$,
thus preserving $\omega_1$,
and repeat the argument in the previous paragraph using
 $\lambda=\mu$, i.e.  the new function $h_\lambda$ added
 by $Q(\{h_\alpha : \alpha<\lambda\})$.
 \medskip

  Now we prove the other direction and assume
  that for every uncountable $X\subset\lambda$,
  there are distinct $\alpha,\beta\in X$ satisfying
  that $h_\alpha\cup h_\beta$ is a function.
  Let $\{ q_\xi : \xi\in\omega_1\}$ be any subset
  of $Q(\{h_\alpha : \alpha\in\omega_1\})$. 
 By passing to a subcollection, we can 
 assume that there is a pair $s,h$ such
 that $(s,h) = (s_{q_\xi},h_{q_\xi})$ for
 all $\xi\in\omega_1$.
  Let $X  = \{ \delta_{q_\xi} : \xi\in\omega_1\}$
  and choose distinct $\xi,\eta\in\omega_1$ so
  that $h_{\delta_{q_\xi}}\cup h_{\delta_{q_\eta}}$ is a 
  function. It is easy to check
  that $(s, h, F_{q_\xi}\cup F_{q_\eta}, f_{q_\xi}\vee f_{q_\eta})$
  is a common extension of $q_\xi$ and $q_\eta$.
\end{proof}

The dominating real aspect of the linear coherent sequence poset
introduces some complications when utilized in an iteration
which we deal with by introducing
an alternate, but equivalent, formulation of the poset. We will
separate   each of the components $s$ and $h$ into two pieces where  one
piece is not allowed to be a  name. This is just to emphasize which
portion has been forced to have a specific value (or \textit{determined\/}
as it is often called).
We will abuse the standard
notation $\check{a}$  to mean that $\check{a}$ is a finite ground model set.

\begin{definition} For a poset $P$ and   $P$-names, 
 $\{ \dot f_\alpha, \dot h_\alpha : \alpha < \lambda\}$,
 that is forced to be a linear coherent sequence, we define\label{Q'} 
 the $P$-name $\dot Q'(\{ \dot h_\alpha : \alpha < \lambda\})$ as
 follows. A condition $q\in 
 \dot Q'(\{ \dot h_\alpha : \alpha < \lambda\})$ is a tuple
  $( \check{n}_q, \check{s}_q, \tau_q, \check{h}_q, \pi_q, \check{F_q}, \dot f_q)$
  where the following are forced by $1_{P}$: 
  \begin{enumerate}
  \item  $n_q\in \omega$, $s_q   \in \omega^{n_q}$, $s_q\leq \tau_q\in \omega^{n_q}$,
  \item $h_q$ is a 2-valued function with domain $s_q^{\downarrow}$,
  \item $h_q\subset \pi_q$ is a 2-valued function with domain $\tau_q^{\downarrow}$,
  \item $F_q$ is a finite subset of $\lambda$,
  \item for each $\alpha\in F_q$, $\dot f_\alpha \leq \dot f$.
  \end{enumerate}
Say that a condition $q\in \dot Q'(\{ \dot h_\alpha : \alpha < \lambda\})$ 
is pure if $\tau_q = \check{s}_q $ and $\pi_q = \check{h}_q$.

For each $q\in \dot Q'(\{\dot h_\alpha :\alpha < \lambda\})$, 
  let $\hat q = ( \tau_q , \pi_q, F_q, \dot f_q)$.  We note that
   $\hat q$ is forced to be an element of $Q(\{ \dot h_\alpha :\alpha < \lambda\})$
   and we define the ordering on 
$    \dot Q'(\{\dot h_\alpha :\alpha < \lambda\})$ 
by $q_1 \leq q_2$ if 
  $\hat{q}_1 \leq \hat{q}_2 $.
\medskip

 Suppose that $s\in \omega^n$ and $\tau$ is a $P$-name for an
 element of $\omega^n$ such that $1\Vdash s\leq \tau$. 
 For any 2-valued function $h$ with domain $s^{\downarrow}$,
  let $h\oplus 0_{\tau}$ denote the name of the function with domain
   $\tau^{\downarrow}$ that extends $h$ and has value $0$
   at all $(m,j)\in \tau^{\downarrow}\setminus s^\downarrow$.
\end{definition}

\begin{lemma}  Suppose that $\langle P_\alpha, \dot Q_\beta : \alpha \leq \lambda,
\beta <\lambda\rangle$ is a finitely\label{cohere} supported iteration,
and $\{ \dot f_\alpha , \dot h_\alpha : \alpha < \lambda \}$ are
 $P_\lambda$-names 
satisfying
\begin{enumerate}
\item for each $\alpha<\lambda$, each of $\dot f_\alpha$ and 
 $\dot h_\alpha$ are $P_{\alpha+1}$-names  satisfying that $\dot f_\alpha$
 is forced to be in $\omega^\omega$ and $\dot h_\alpha$ is a 2-valued
 function with domain $\dot f_\alpha^\downarrow$,
\item for each $\beta<\lambda$, $\dot Q_\beta$ is a $P_\beta$-name of a ccc poset,
\item for each even ordinal $\beta<\lambda$, if $P_\beta$ forces
that $\{ \dot h_\alpha : \alpha < \beta \}$ 
is a linear coherent family, then $\dot Q_\beta$ is 
the $P_\beta$-name  $\dot Q'(\{\dot h_\alpha : \alpha <\beta \})$,
and $\dot f_\beta, \dot h_\beta$ are the canonical $P_{\beta+1}$-names
 associated with $\dot Q_\beta$, otherwise $\dot Q_\beta = 2^{<\omega}$ and
 $\dot f_\beta=\dot f_0$,
 and $\dot h_\beta = \dot h_0$,
 \item if $\beta=\alpha+1<\lambda$ is an odd ordinal, then $\dot f_\beta=\dot f_\alpha$
 and $\dot h_\beta = \dot h_\alpha$.
 \end{enumerate}
Then for each $\beta \leq \lambda$, $P_\beta$ forces
that $\{ \dot h_\alpha : \alpha < \beta \}$ is
a linear coherent family. Furthermore, if $\beta$ has uncountable cofinality,
 then $P_\beta$ forces that for every uncountable $X\subset \beta$,
  there are distinct $\xi,\eta\in X$ such that
   $\dot h_\xi\cup \dot h_\eta$ is a function.
\end{lemma}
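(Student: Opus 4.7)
The plan is a transfinite induction on $\beta\leq\lambda$, focused primarily on the linear coherence conclusion; the function-union clause will drop out as a direct consequence of Lemma \ref{cccequivalent} once coherence is in hand. The base case $\beta=0$ is vacuous, and a limit stage causes no trouble since linear coherence is a pairwise property: any pair $\dot h_\xi,\dot h_\eta$ with $\xi,\eta<\beta$ was already handled at the earlier stage $\max(\xi,\eta)+1$, so taking unions preserves the property.

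For a successor $\beta=\alpha+1$, the case $\alpha$ odd is trivial because hypothesis (4) forces $\dot h_\alpha=\dot h_{\alpha-1}$, so no genuinely new function is appended. When $\alpha$ is even, the induction hypothesis says $P_\alpha$ forces $\{\dot h_\gamma:\gamma<\alpha\}$ to be linear coherent, so hypothesis (3) gives $\dot Q_\alpha=\dot Q'(\{\dot h_\gamma:\gamma<\alpha\})$. For each fixed $\gamma<\alpha$, the set $D_\gamma=\{q:\gamma\in F_q\}$ is dense in $\dot Q_\alpha$: given $q$, the induction guarantees $h_\gamma$ and $h_{\delta_q}$ disagree on only a finite subset of their common domain, so one first strengthens the pure part of $q$ (lengthening $s_q$ and $h_q$) to push $n_q$ above this disagreement, then applies Proposition \ref{density} to adjoin $\gamma$ to $F_q$. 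The generic therefore meets $D_\gamma$, forcing $\dot h_\gamma$ to agree with $\dot h_\alpha$ past some $n$, which is precisely coherence with $\dot h_\gamma$.

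The function-union clause for $\beta$ of uncountable cofinality is then a direct corollary. In $V^{P_\beta}$, $\{\dot h_\alpha:\alpha<\beta\}$ is an uncountable linear coherent family; hypothesis (3) identifies $\dot Q_\beta$ with $\dot Q'(\{\dot h_\alpha:\alpha<\beta\})$, which densely embeds via $q\mapsto\hat q$ into the poset $Q(\{\dot h_\alpha:\alpha<\beta\})$ of Definition \ref{defcoherent}. Hypothesis (2) forces $\dot Q_\beta$ to be ccc in $V^{P_\beta}$, hence so is $Q(\{\dot h_\alpha:\alpha<\beta\})$. Applying Lemma \ref{cccequivalent} inside $V^{P_\beta}$ yields the function-union property for every uncountable $X\subset\beta$.

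The main delicacy is the preliminary strengthening in the density argument at successor even stages: Proposition \ref{density} as stated provides the extension only when the function condition on $h_{\delta_q}\cup h_\gamma\restriction[n_q,\omega)\times\omega$ already holds, so one must first lengthen the pure part of $q$ to absorb the finite disagreement guaranteed by the inductive coherence. The other piece of bookkeeping is verifying that the map $q\mapsto\hat q$ is a dense embedding of $\dot Q'$ into $Q$, equivalently that pure conditions (where $\tau_q=\check s_q$ and $\pi_q=\check h_q$) are cofinal, which is how the ccc hypothesis is transferred to the form assumed by Lemma \ref{cccequivalent}.
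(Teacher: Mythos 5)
Your treatment of the first conclusion (linear coherence) is fine and is essentially the paper's: it follows from the density of the sets $D_\gamma$ via Proposition \ref{density}, with exactly the bookkeeping you describe. The problem is the second conclusion. You read it off from hypothesis (2): since $\dot Q_\beta$ is ccc and $q\mapsto\hat q$ densely embeds $\dot Q'$ into $Q(\{\dot h_\alpha:\alpha<\beta\})$, Lemma \ref{cccequivalent} gives the function-union property. But hypotheses (2) and (3) only speak of $\beta<\lambda$; for $\beta=\lambda$ there is no poset $\dot Q_\lambda$ in the iteration whose ccc-ness you could invoke, and $\beta=\lambda$ of cofinality $\omega_1$, with $\dot X$ forced cofinal in $\lambda$, is the only case with content --- the paper explicitly reduces to it (``we might as well simply assume that it holds for all $\beta<\lambda$ and prove it for $\lambda$''; bounded $X$ is disposed of by the coherence-with-$h_\mu$ argument in the first paragraph of the proof of Lemma \ref{cccequivalent}). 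Moreover, in the intended application the ccc-ness of $\dot Q_\beta=\dot Q'(\{\dot h_\alpha:\alpha<\beta\})$ asserted in (2) is itself only available because the function-union conclusion has already been secured at stage $\beta$; deriving the conclusion from (2) is therefore circular where it is not simply inapplicable.

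What is missing is the actual work of the lemma: a direct proof that $P_\lambda$ forces the function-union property for an uncountable $\dot X$ cofinal in a $\lambda$ of cofinality $\omega_1$. The paper does this with a Laver--Kunen style amalgamation: choose conditions $p_\xi$ deciding elements $\beta_\xi\in\dot X$; normalize them so that every even coordinate is pure, the data $(n_i,s_i,h_i)$ are uniform, the supports form a $\Delta$-system, and the sets $F_{p_\xi(\cdot)}$ are coherently nested; fix an even $\mu<\lambda$ below which the bounded coordinates stabilize; pass to a $P_{\mu+1}$-generic filter containing uncountably many $p_\xi\restriction\mu$, so that the functions $h_{\delta_\xi}$ are evaluated and cohere with $h_\mu$ beyond a fixed $\bar m$; and finally amalgamate two of the resulting conditions $\bar p_\xi,\bar p_\eta$ using Facts \ref{fullfour}--\ref{addmu} to produce a single condition forcing $\dot h_{\beta_\xi}\cup\dot h_{\beta_\eta}$ to be a function. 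None of this, nor any substitute for it, appears in your proposal, so as written it does not establish the lemma.
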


  \begin{proof}  It is immediate from the remarks immediately 
  after Definition \ref{defcoherent} that, for each $\beta <\lambda$,
   $P_\beta$ forces that $\{ \dot h_\alpha : \alpha < \beta\}$ is
    a linear coherent family. It, however, is not immediate
    that $P_\beta$ is ccc. We prove the second stated conclusion   
    of the Lemma by induction on $\beta \leq \lambda$. 
Since this conclusion is vacuous for $\beta < \omega_1$, we
might as well simply assume that it holds for all $\beta < \lambda$
and prove it for $\lambda$.  
It follows from Corollaries \ref{successor} and \ref{cccequivalent}
that for uncountable $X\subset \beta <\lambda$, it remains
true, i.e. not destroyed by further forcing,
that there are $\xi<\eta\in X$ such that
 $h_\xi\cup h_\eta$ is a function.
Needless to say, there is nothing to 
prove unless $\lambda$ has cofinality $\omega_1$. 

In preparation we make some observations, stated as Facts, about $P_\lambda$.

\begin{Fact} Let $ \tilde P_\lambda$ be the set of conditions that satisfy, for each
even $\beta\in \dom(p)$ and each odd $\alpha+1\in F_{p(\beta)}$,
 we also have that $\alpha$ is in $F_{p(\beta)}$.  Then
  $\tilde P_\lambda$ is a dense subset of $P_\lambda$.
\end{Fact}

\begin{Fact} Let   $p\in \tilde P_\lambda$ and suppose\label{fullfour}
that $\alpha <\beta$ are even ordinals in $\dom(p)$. Then $p$ will force
that $\dot h_\alpha\cup \dot h_\beta$ is a function if
\begin{enumerate}
\item $\alpha \in F_{p(\beta)}$,
\item $n_{p(\alpha)} = n_{p(\beta)}$, $s_{p(\alpha)} = s_{p(\beta)}$,
   $h_{p(\alpha)} = h_{p(\beta)}$,
   \item $\pi_{p(\alpha)} = h_{p(\alpha)} \oplus 0_{\tau_{p(\alpha)}}$ (as in Definition \ref{Q'}), and
   \item $\pi_{p(\beta)} = h_{p(\beta)} \oplus 0_{\tau_{p(\beta)}}$.
\end{enumerate}
\end{Fact}

\begin{Fact} Let   $p\in \tilde P_\lambda$ and suppose\label{addF}
that $\alpha <\beta$ are even ordinals in $\dom(p)$ and
that $\alpha\in F_{p(\beta)}$. Then $\bar p$ is an extension of $p$
where $\bar p(\gamma) = p(\gamma)$ for all $\beta\neq\gamma\in \dom(p)$
and 
 $\bar p(\beta)$ is equal to
 $
 ( n_{p(\beta)}, s_{p(\beta)}, \tau_{p(\beta)}, 
  h_{p(\beta)}, 
     \pi_{p(\beta)},   F_{p(\beta)}\cup F_{p(\alpha)}, \dot f_{p(\beta)})$.
\end{Fact}

\begin{Fact} Suppose that $p\in \tilde P_\lambda$ and that for
even $\alpha < \beta$ both in $\dom(p)$ we have\label{just3}
the conditions (2)-(4) of Fact \ref{fullfour} holding
and that $\delta_{p(\alpha)}$ is an element of $F_{p_\xi(\beta)}$.
Then the  condition $\bar p \leq p$ forces that $
\dot h_\alpha\cup \dot h_\beta$ is a function where
$\bar p$ is defined as follows: 
$\bar p(\gamma)=p(\gamma)$ for all $\beta\neq \gamma \in \dom(p)$,
and 
$$\bar p(\beta) = (n_{p(\beta)}, s_{p(\beta)},
 \tau_{p(\beta)}, h_{p(\beta)}, h_{p(\beta)}\oplus 0_{\tau_{p(\beta)}} , 
   F_{p(\beta)}\cup \{ \alpha\}, \dot f_{p(\beta)} \vee \dot f_\alpha)~.$$
\end{Fact}

\begin{Fact} Let $p\in \tilde P_\lambda$ and\label{addmu}
let $  \beta$ be an even ordinal
in $\dom(p)$ such that  $p(\beta)$ is a   pure condition
and let $\delta = \delta_{p(\beta)}$ be the maximum even ordinal 
in $  \max(F_{p(\beta)})$.  Choose
any $m\in\omega$ such that $n = n_{p(\beta)}< m$ and any
$\bar p\in P_{\delta+1}$ that forces   values $\bar s, \bar h$
on $\dot f_\delta\restriction [n,m)$ and $\dot h_\delta\restriction 
\bar s^{\downarrow}$ respectively.
Recall that $1_{P_\beta}$ forces that $\dot f_\delta\leq \dot f_{p(\beta)}$.
Then $\tilde p$ is an extension
of $\bar p$ and $p$ where $\tilde p\restriction \delta = \bar p$,
 $\tilde p(\gamma) = p(\gamma)$ for $\beta\neq \gamma \in \dom(p) 
 \setminus \delta{+}1$,
 and $\tilde p(\beta) =
 (m, s_{p(\beta)}\cup \bar s, \bar \tau , h_{p(\beta)}\cup \bar h,
 ( h_{p(\beta)}\cup \bar h)\oplus 0_{\bar\tau}, F_{p(\beta)}, \dot f_{p(\beta)})$
 where $\bar\tau = s_{p(\beta)}\cup \dot f_{p(\beta)}\restriction [n,m)$.
 \medskip

 Moreover, if for some even $\alpha <\beta$, $\bar p$ forces
 that $\dot h_\delta\cup (\dot h_\alpha \restriction \left([m,\omega)\times \omega )\right)$
 is a function, then we could instead define $\tilde p(\beta)$
 to equal $
 (m, s_{p(\beta)}\cup \bar s, \bar \tau , h_{p(\beta)}\cup \bar h,
 ( h_{p(\beta)}\cup \bar h)\oplus 0_{\bar\tau}, F_{p(\beta)}\cup\{\alpha\}, \dot f_{p(\beta)}
\vee \dot f_\alpha )$
\end{Fact}

\bigskip

With the benefit of the above Facts we are ready to prove Theorem \ref{cohere}.
Let $\dot X$
be a $P_\lambda$-name of an uncountable subset of $\lambda$.
By the definition of the family $\{ \dot h_\alpha : \alpha<\lambda\}$,
 we may assume that $\dot X$ is forced to consist of even ordinals.
Since we are proceeding by induction, we may assume
that $\dot X$ is forced to be cofinal in $\lambda$.
Let $e$ be a strictly increasing function from $\omega_1$
to a cofinal subset of $\lambda$.
 For each $\xi < \omega_1$, choose a condition $p_\xi\in \tilde P_\lambda$
 that forces some $ \beta_\xi \in \lambda\setminus e(\xi) $ is an element
 of $\dot X$. For each $\xi\in\omega_1$, let $\beta_\xi\in H_\xi$ denote the finite
 support of $p_\xi$. 

 For each $\xi\in\omega_1$, we make some additional assumptions
 about $p_\xi$. Let $\beta$ be the maximum even ordinal in $H_\xi$. 
 By possibly strengthening $p_\xi\restriction \beta$ we can ensure
 that $p_\xi\restriction \beta$ forces that $p_\xi(\beta)$ is pure. 
 We can also ensure that $F_{p_\xi(\beta)} \cap \alpha $ is a subset
 of $\dom(p_\xi)$, and  for each even $\alpha\in \dom(p_\xi)\cap \beta$,
 $F_{p_\xi(\beta )}\cap \alpha $ is a subset of $F_{p_\xi(\alpha)}$.   
 We can also ensure that $n_{p_\xi(\alpha)}\geq  n_{p_\xi(\beta)}  $ 
 for all even $\alpha\in \dom(p_\xi)$.
 This is step 1 of a finite recursion (since every descending
 sequence of ordinal is finite).  In this way we can 
 assume that, for each even ordinal $\beta$ in $\dom(p_\xi)$,
  $p_\xi\restriction \beta$ forces that $p_\xi(\beta)$ 
  is pure 
 and for even $\alpha\in H_\xi\cap \beta$,
 $F_{p_\xi(\beta)}\cap \alpha \subset
  F_{p_\xi(\alpha)}$.

 By passing to an uncountable subset we can 
 assume that each $H_\xi$ has cardinality $\ell$ and fix
 an increasing enumeration, $\{ \alpha(\xi,i) : i<\ell\}$ of $H_\xi$. 
 For each $i<\ell$ and $\xi,\eta<\omega_1$, we may assume
 that $\alpha(\xi,i)$ is even if and only if $\alpha(\eta,i)$ is even,
 and that  there is a fixed $\bar \imath<\ell$ so
 that $\beta_\xi  = \alpha(\xi, \bar \imath)$ for all $\xi$. 
 Let $E$ denote the set of $i<\ell$ such that (each)
  $\alpha(\xi,i)$ is even
We can also assume that for $\xi <\eta$ and $i\in E $,
$(n_i, s_i, h_i) =  
 (n_{p_\xi( \alpha(\xi,i))} , s_{p_\xi( \alpha(\xi,i))} , h_{p_\xi( \alpha(\xi,i))} ) = 
(n_{p_\xi( \alpha(\xi,i))} , s_{p_\eta(\alpha(\eta,   i))} , h_{p_\eta(\alpha(\eta,i))})
$.
 
 Notice that $\{ \alpha(\xi,\bar \imath) : \xi\in\omega_1\}$
 is unbounded in $\lambda$. 
 By a recursion of length at most $\bar \imath$, we can repeatedly
 pass to an uncountable subset of $\xi\in\omega_1$ so as to ensure,
  for each $i<\bar \imath$, either $\{ \alpha(\xi,i) : \xi\in\omega_1\}$
   is unbounded in $\lambda$ or has an upper bound $\mu_i<\lambda$.
   Let $i_0 \leq  \bar \imath$ be minimal so that 
    $\{ \alpha(\xi,i_0) : \xi\in\omega_1\}$ is unbounded. 
    Choose any even $\mu<\lambda$ so that $\alpha(\xi,i)<\mu$
    for all $\xi\in\omega_1$ and $i<i_0$. 
    For simple convenience assume that,
    for all $\xi\in\omega_1$, $\alpha(\xi,i_0)$ is an even ordinal. 
 Let   $\{ i_k : k <\bar\ell\} $ be an enumeration of
  $E\setminus i_0$.

By the inductive assumption, $P_{\mu+1}$ is ccc and so we may
choose a generic filter $G_{\mu+1}$ for $P_{\mu+1}$ such
that $\Gamma = \{ \xi \in\omega_1 : p_\xi \restriction \mu \in G_\mu\}$ is
uncountable. By re-indexing we may assume that $\mu+1< \alpha(\xi,i_0)$
for all $\xi\in\omega_1$, and by again choosing an
uncountable subsequence
 we can assume that, for $\xi<\eta$ both in $\Gamma$,
   $H_\xi \subset \alpha(\eta,i_0)$.

For each $\xi\in\Gamma$, let $\delta_\xi$ denote the maximum
element of $F_{p_\xi(\alpha(\xi,i_0))}$.  Since $\delta_\xi<\mu$
we may  
let $f_{\delta_\xi}$ and $h_{\delta_\xi}$ be the valuations
of $\dot f_{\delta_\xi}$ and  $\dot h_{\delta_\xi}$ respectively
by the filter $G_{\mu+1}$. Let
 $f_\mu$ and $h_\mu$ be defined analogously.
Now choose a value $\bar m\in\omega$ and an uncountable
 $\Gamma_1\subset \Gamma$ satisfying that
  $f_{\delta_\xi} \restriction [\bar m,\omega)\leq f_\mu$,
  and 
   $h_{\delta_\xi}\restriction [\bar m,\omega)\times\omega$
   is a subset of $h_\mu$ for all $\xi\in \Gamma_1$.

We work in the extension $V[G_{\mu+1}]$ and fix any $\xi\in \Gamma_1$.
We can ignore $p_\xi\restriction \mu$ since we have
that $p_\xi\restriction \mu\in G_{\mu+1}$.  
 For each even $\beta\in H_\xi\setminus \mu$, 
 we have that  $\delta_\xi$ is an
 element of $F_{p_\xi(\beta)}$ and is
 the maximum of $F_{p_\xi(\beta)}\cap \mu+1$.
 Let $\bar s = f_{\delta_\xi}\restriction [n_{i_0} , \bar m)$
 and $\bar h = h_{\delta_\xi}\restriction \bar s^\downarrow$. 
Applying the ``moreover'' clause of Fact \ref{addmu}, we have an extension
 $\bar p_\xi$ of $p_\xi$ such
 that $\bar p_\xi \restriction \mu{+}1\in G_{\mu+1}$ forces
 that
 $\bar s = \dot f_{\delta_\xi}\restriction [n_{i_0} , \bar m)$
 and $\bar h = \dot h_{\delta_\xi}\restriction \bar s^\downarrow$,
 $\bar p_\xi(\gamma) = p_\xi(\gamma)$ for $\alpha(\xi,i_0)< \gamma \in H_\xi$
 and
  $\bar p_\xi(\alpha(\xi,i_0))$ is as indicated in Fact \ref{addmu}
  with $\mu$ added to $ F_{\bar p_\xi(\alpha(\xi,i_0))}$.
  Next, by a finite recursion, we keep applying the first
  clause of Fact \ref{addmu}, so as to arrange that
    $\alpha(\xi,i_k)\in F_{\bar p_\xi(\alpha(\xi,i_{k+1}))}$
    and
     $n_{\bar p_\xi(\alpha(\xi,i_k))}=n_{\bar p_\xi(\alpha(\xi,i_{k+1}))}$
     for all
     $  k<\bar \ell-1$.  
     Actually we can stop at
      $\alpha(\xi,\bar\imath ) $ but nothing is saved.
      Additionally, by Fact \ref{addF}, we can assume
      that $F_{\bar p_\xi(\alpha(\xi,i_k))} \subset
         F_{\bar p_\xi(\alpha(\xi,i_{k+1}))}$ for all $k<\bar\ell$.

 Choose $\xi <\eta$ both in $\Gamma_1$
 so that, for all $k < \bar \ell-1$,
  $$( n_{\bar p_\xi(\alpha(\xi,i_k))} , 
 s_{\bar p_\xi(\alpha(\xi,i_k))} , 
  h_{\bar p_\xi(\alpha(\xi,i_k))})
  =( n_{\bar p_\eta(\alpha(\xi,i_k))} , 
 s_{\bar p_\eta(\alpha(\xi,i_k))} , 
  h_{\bar p_\eta(\alpha(\xi,i_k))})
  ~.$$
 Since $\bar p_\xi \restriction \mu+1$ and
 $\bar p_\eta\restriction \mu+1$ are both in $G_{\mu+1}$,
 and $H_\xi\cap H_\eta\subset \mu$, there is a condition
  $\bar p\in P_\lambda$ satisfying that
   $\bar p\setminus \mu+1 = (\bar p_\xi \restriction H_\xi\setminus(\mu+1))
   \cup (\bar p_\eta \restriction H_\eta \setminus (\mu+1))$.
 
  Recall that $\beta_\xi = \alpha(\xi, \bar\imath)$.
Note that $\mu$ is the largest element of $F_{\bar p_\eta(\alpha(\eta,i_0))}$
and that $\bar p_\xi$ forces that
     $ h_\mu\restriction [n_{\bar p_\xi(\beta_\xi)},\omega)
     \times\omega)$ is a subset of $\dot h_{\beta_\xi}$.     
     Therefore, by the \textit{moreover} clause of Fact \ref{addmu},
     we can extend $\bar p $ 
     to a condition $p'$ (but only in the coordinate
      $\alpha(\eta,i_0)$) as in Fact \ref{addmu} so
      that $F_{p'(\alpha(\eta,i_0))}$ is obtained
      by   adding  $\beta_\xi$ to $F_{\bar p_\eta(\alpha(\eta,i_0))}$,
      and, by Fact \ref{addF}, we can also arrange
      that $\beta_\xi $ is an element
      of $F_{  p' (\beta_\eta)}$. 
      The proof is finished by verifying
      that $\alpha  = \beta_\xi $ and $\beta = \beta_\eta$
      satisfy the conditions of Fact \ref{fullfour} for
      the condition $p'$.
         \end{proof}

Now we can complete the proof of Lemma \ref{tight}

\bgroup

\def\proofname{Proof of Lemma \ref{tight}}

\begin{proof} 
Let $\kappa$ be an uncountable regular cardinal satisfying $\kappa^{<\kappa}=\kappa$.
 Fix an enumeration $\{ R_\alpha : \alpha < \kappa\}$ of
   $H(\kappa)$ (the set of sets with transitive closure having cardinality
    less than $\kappa$, see \cite{BaumHandbk}).  

 Define the system $\langle P_\alpha, \dot Q_\beta : \alpha \leq \kappa,
\beta <\kappa\rangle$ as in  Lemma \ref{cohere} 
as well as the names
   $\{ \dot f_\alpha , \dot h_\alpha : \alpha < \lambda \}$, where,
   for each odd ordinal $\alpha<\kappa$,
    $\dot Q_\alpha$ is chosen so that if $R_{\gamma_\alpha} = \dot Q_\alpha$,
    then $\gamma_\alpha$ is the least ordinal $\gamma<\kappa$ satisfying
    that $R_{\gamma}$ is a $P_\alpha$-name of a ccc poset
    that is not an element of $\{ \dot Q_\eta : \eta < \alpha, \ \eta\
    \mbox{an odd ordinal}\}$. By Lemma \ref{cohere}, it follows
    that $\{ \dot f_\alpha , \dot h_\alpha : \alpha < \kappa\}$
    is forced to be a linear coherent family. By the definition
    of $\dot Q_\beta$ for even ordinals $\beta$, it is clear
    that $\{ \dot f_\alpha : \alpha < \kappa\}$ is a dominating
    family. We now prove that the $\omega^\omega$-gap
     $$\{ (\dot a_{f_\alpha} = \dot h_\alpha^{-1}(0),
      \dot b_{f_\alpha} = \dot h_\alpha^{-1}(1)) : \alpha <\kappa\}$$ 
    is a tight gap. Let $\dot X$ be any $P_\kappa$-name of 
    a subset of $\omega\times\omega$ such
    that it is forced that there is an
    infinite set of $n$ such
    that $\dot X\cap \left(\{n\}\times\omega\right)$
    is infinite.       
    Since, by Lemma \ref{cohere}, $P_\kappa$ is ccc, there
    is an even ordinal  $\lambda<\kappa$ such that $\dot A$ and $\dot X$
    are equivalent to $P_\lambda$-names. 
    Let $G_\mu$ be a $P_\mu$-generic filter and let $X$
    be the valuation of $\dot X$ by $G_\mu$.
    We prove that
    it will be forced that $\dot h_\lambda \restriction X$
    takes on values $0$ and $1$ infinitely often. 
    In $V[G_\mu]$, the valuation of the poset
     $\dot Q_\lambda$ is equivalent to the poset
      $Q = Q(\{ h_\alpha : \alpha < \lambda\})$. 
      To prove this, consider any condition $r\in Q$
      and simply note the trivial claim
      that there is an extension $q\in Q$
      satisfying that there is an $m>n_r$ and
       values $(m,i),(m,j)\in X$ such
       that $h_q((m,i))=0$ and $h_q(m,j)=1$.

       Finally we explain how our enumeration scheme ensured
       that Martin's Axiom holds in the forcing extension by $P_\kappa$.
    It suffices to prove that if $\dot Q\in H(\kappa)$ is a $P_\kappa$-name
    of a ccc poset and if  $\{\dot  D_\xi : \xi <\mu\}$, for some $\mu<\kappa$,
    is a set of $P_\kappa$-names for dense subsets of $\dot Q$,
    then there is a $P_\kappa$-name $\tilde G$ for a filter
    on $\dot Q$ that meets every $\dot D_\xi$. 
    Again, using that $P_\kappa$ is ccc and that $\mu$ and $|\dot Q|$
    are less than $\kappa$, there is a $\beta < \kappa$
    such that $\dot Q$ and every $\dot D_\xi$ is equivalent
    to $P_\beta$-names. Since we were lazy with our enumeration
    method we play a little trick. Choose any $\nu<\kappa$
    large enough so that the $P_\beta$-name for
    the iteration $\dot Q* \mbox{Fn}(\nu,2)$ is not
    in the list $\{ \dot Q_\alpha : \alpha < \beta\}$. 
    Let $\gamma<\kappa$ be such that $R_\gamma = \dot Q*\mbox{Fn}(\nu,2)$.
    Since $\dot Q$ is   ccc is the forcing extension by $P_\kappa$,
     it is, for every $\beta \leq \alpha < \kappa$, a $P_\alpha$-name
     of a ccc poset.
    By the definition of the iteration sequence,
     there is an odd ordinal $\alpha \geq  \beta$ satisfying 
     that $\gamma_\alpha =\gamma$.  It is a standard exercise
     that $P_{\alpha+1} = P_\alpha * \dot Q * \mbox{Fn}(\mu,2)$
     will add a filter on $\dot Q$ that meets
     every $\dot D_\xi$ (since these are all $P_\alpha$-names).
   \end{proof}

   \egroup
   
\begin{bibdiv}

\def\cprime{$'$} 

\begin{biblist}

\bib{ARS}{article}{
   author={Abraham, Uri},
   author={Rubin, Matatyahu},
   author={Shelah, Saharon},
   title={On the consistency of some partition theorems for continuous
   colorings, and the structure of $\aleph_1$-dense real order types},
   journal={Ann. Pure Appl. Logic},
   volume={29},
   date={1985},
   number={2},
   pages={123--206},
   issn={0168-0072},
   review={\MR{0801036}},
   doi={10.1016/0168-0072(84)90024-1},
}

     \bib{AvTo11}{article}{
   author={Avil\'{e}s, Antonio},
   author={Todorcevic, Stevo},
   title={Multiple gaps},
   journal={Fund. Math.},
   volume={213},
   date={2011},
   number={1},
   pages={15--42},
   issn={0016-2736},
   review={\MR{2794934}},
   doi={10.4064/fm213-1-2},
}

\bib{BFZ}{article}{
   author={Bardyla, Serhii},
   author={Maesano, Fortunato},
   author={Zdomskyy, Lyubomyr},
   title={Selective separability properties of Fr\'echet-Urysohn spaces and
   their products},
   journal={Fund. Math.},
   volume={263},
   date={2023},
   number={3},
   pages={271--299},
   issn={0016-2736},
   review={\MR{4669147}},
   doi={10.4064/fm230522-13-10},
}

\bib{BaDo1}{article}{
   author={Barman, Doyel},
   author={Dow, Alan},
   title={Selective separability and ${\rm SS}^+$},
   journal={Topology Proc.},
   volume={37},
   date={2011},
   pages={181--204},
   issn={0146-4124},
   review={\MR{2678950}},
}

\bib{BaDo2}{article}{
   author={Barman, Doyel},
   author={Dow, Alan},
   title={Proper forcing axiom and selective separability},
   journal={Topology Appl.},
   volume={159},
   date={2012},
   number={3},
   pages={806--813},
   issn={0166-8641},
   review={\MR{2868880}},
   doi={10.1016/j.topol.2011.11.048},
}

\bib{BaumHandbk}{article}{
   author={Baumgartner, James E.},
   title={Applications of the proper forcing axiom},
   conference={
      title={Handbook of set-theoretic topology},
   },
   book={
      publisher={North-Holland, Amsterdam},
   },
   isbn={0-444-86580-2},
   date={1984},
   pages={913--959},
   review={\MR{0776640}},
}

\bib{BBM}{article}{
   author={Bella, Angelo},
   author={Bonanzinga, Maddalena},
   author={Matveev, Mikhail},
   title={Variations of selective separability},
   journal={Topology Appl.},
   volume={156},
   date={2009},
   number={7},
   pages={1241--1252},
   issn={0166-8641},
   review={\MR{2502000}},
   doi={10.1016/j.topol.2008.12.029},
}

\bib{MA3}{article}{
   author={Dow, Alan},
   title={MA and three Fr\'echet spaces},
   journal={Topology Appl.},
   volume={364},
   date={2025},
   pages={Paper No. 109107, 12},
   issn={0166-8641},
   review={\MR{4873788}},
   doi={10.1016/j.topol.2024.109107},
}

\bib{vDKvM89}{article}{
   author={van Douwen, Eric K.},
   author={Kunen, Kenneth},
   author={van Mill, Jan},
   title={There can be $C^*$-embedded dense proper subspaces in
   $\beta\omega-\omega$},
   journal={Proc. Amer. Math. Soc.},
   volume={105},
   date={1989},
   number={2},
   pages={462--470},
   issn={0002-9939},
   review={\MR{0977925}},
   doi={10.2307/2046965},
}

\bib{Dow2025}{article}{
      author={Dow, Alan},
       title={Automorphisms of ${ {P}}(\omega)/fin$ and large continuum},
        date={2025},
        ISSN={0168-0072},
     journal={Annals of Pure and Applied Logic},
      volume={176},
      number={10},
       pages={103627},
  url={https://www.sciencedirect.com/science/article/pii/S0168007225000764},
}

\bib{Dow2014}{article}{
   author={Dow, Alan},
   title={$\pi$-weight and the Fr\'echet-Urysohn property},
   journal={Topology Appl.},
   volume={174},
   date={2014},
   pages={56--61},
   issn={0166-8641},
   review={\MR{3231610}},
   doi={10.1016/j.topol.2014.06.013},
}

 \bib{Farah2000}{article}{
   author={Farah, Ilijas},
   title={Analytic quotients: theory of liftings for quotients over analytic
   ideals on the integers},
   journal={Mem. Amer. Math. Soc.},
   volume={148},
   date={2000},
   number={702},
   pages={xvi+177},
   issn={0065-9266},
   review={\MR{1711328}},
   doi={10.1090/memo/0702},
}

 \bib{GrSa2011}{article}{
   author={Gruenhage, Gary},
   author={Sakai, Masami},
   title={Selective separability and its variations},
   journal={Topology Appl.},
   volume={158},
   date={2011},
   number={12},
   pages={1352--1359},
   issn={0166-8641},
   review={\MR{2812487}},
   doi={10.1016/j.topol.2011.05.009},
}

\bib{Laver}{article}{
   author={Laver, Richard},
   title={Linear orders in $(\omega )\sp{\omega }$\ under eventual
   dominance},
   conference={
      title={Logic Colloquium '78},
      address={Mons},
      date={1978},
   },
   book={
      series={Stud. Logic Found. Math.},
      volume={97},
      publisher={North-Holland, Amsterdam-New York},
   },
   isbn={0-444-85378-2},
   date={1979},
   pages={299--302},
   review={\MR{0567675}},
}

\bib{Justin}{article}{
   author={Moore, Justin Tatch},
   title={Some remarks on the Open Coloring Axiom},
   journal={Ann. Pure Appl. Logic},
   volume={172},
   date={2021},
   number={5},
   pages={Paper No. 102912, 6},
   issn={0168-0072},
   review={\MR{4228344}},
   doi={10.1016/j.apal.2020.102912},
}

\bib{Rabus}{article}{
   author={Rabus, Mariusz},
   title={Tight gaps in $\scr P(\omega)$},
   journal={Topology Proc.},
   volume={19},
   date={1994},
   pages={227--235},
   issn={0146-4124},
   review={\MR{1369762}},
}

\bib{RZbd}{article}{
   author={Repov\v s, Du\v san},
   author={Zdomskyy, Lyubomyr},
   title={On $M$-separability of countable spaces and function spaces},
   journal={Topology Appl.},
   volume={157},
   date={2010},
   number={16},
   pages={2538--2541},
   issn={0166-8641},
   review={\MR{2719396}},
   doi={10.1016/j.topol.2010.07.036},
}

  \bib{SchSelSep}{article}{
   author={Scheepers, Marion},
   title={Combinatorics of open covers. VI. Selectors for sequences of dense
   sets},
   journal={Quaest. Math.},
   volume={22},
   date={1999},
   number={1},
   pages={109--130},
   issn={0379-9468},
   review={\MR{1711901}},
   doi={10.1080/16073606.1999.9632063},
}

\bib{Scheepers}{article}{
   author={Scheepers, Marion},
   title={Gaps in $\omega^\omega$},
   conference={
      title={Set theory of the reals},
      address={Ramat Gan},
      date={1991},
   },
   book={
      series={Israel Math. Conf. Proc.},
      volume={6},
      publisher={Bar-Ilan Univ., Ramat Gan},
   },
   date={1993},
   pages={439--561},
   review={\MR{1234288}},
}

\bib{StevoNogura}{article}{
   author={Todorcevic, Stevo},
   title={A proof of Nogura's conjecture},
   journal={Proc. Amer. Math. Soc.},
   volume={131},
   date={2003},
   number={12},
   pages={3919--3923},
   issn={0002-9939},
   review={\MR{1999941}},
   doi={10.1090/S0002-9939-03-07002-3},
}

  \bib{StevoPPT}{book}{
   author={Todor\v{c}evi\'{c}, Stevo},
   title={Partition problems in topology},
   series={Contemporary Mathematics},
   volume={84},
   publisher={American Mathematical Society, Providence, RI},
   date={1989},
   pages={xii+116},
   isbn={0-8218-5091-1},
   review={\MR{0980949}},
   doi={10.1090/conm/084},
}

  \end{biblist}
		\end{bibdiv}
   
   \end{document}